\newtheorem{thm}{Theorem}[section]
\newtheorem{lemma}{Lemma}[section]
\newtheorem{prop}{Proposition}[section]
\newtheorem{rmk}{Remark}[section]
\theoremstyle{definition}
\numberwithin{equation}{section}
\newcommand{\rr}{\mathbb R}
\newcommand{\al}{\alpha}
\newcommand{\de}{\delta}
\newcommand{\la}{\lambda}
\newcommand{\ga}{\gamma}
\newcommand{\si}{\sigma}
\newcommand{\Om}{\Omega}
\newcommand{\pl}{\partial}
\newcommand{\calP}{\mathcal P}
\newcommand{\BB}{B_1}
\newcommand{\Bs}{B_{\sigma}}
\newcommand{\Ud}{U_\delta}
\newcommand{\bl}{\bar\la_{\tau,\ga}}
\newcommand{\Jd}{J_\la^{\mathrm{(d)}}}
\newcommand{\Js}{J_\la^{\mathrm{(s)}}}
\newcommand{\lstar}{\la_{\tau,\ga}^*}
\newcommand{\btg}{\beta_{\tau,\ga}}
\newcommand{\sitg}{\sigma_{\tau,\ga}}
\newcommand{\Latg}{\Lambda_{\tau,\ga}}
\newcommand{\uz}{\underline z}
\newcommand{\blPd}{\bar\lambda_{\mathcal P}^{(d)}}
\def\beq{\begin{equation}}
\def\beqq{\begin{equation*}}
\def\eeq{\end{equation}}
\def\eeqq{\end{eqnarray*}}
\def\bal{\begin{aligned}}
\def\eal{\end{aligned}}
\def\bca{\begin{cases}}
\def\eca{\end{cases}}
\def\sideremark#1{\ifvmode\leavevmode\fi\vadjust{\vbox to0pt{\vss% the remark
 \hbox to 0pt{\hskip\hsize\hskip1em%                          will appear only
 \vbox{\hsize3cm\tiny\raggedright\pretolerance10000%          on the side
  \noindent #1\hfill}\hss}\vbox to8pt{\vfil}\vss}}}%
\begin{document}
\title[Radial Onsager vortices]{On radial two-species Onsager vortices near the critical temperature}
\author[T.~Ricciardi]{Tonia Ricciardi${}^\ast$}\thanks{${}^\ast$Corresponding author}
\address[T.~Ricciardi] {Dipartimento di Matematica e Applicazioni,
Universit\`{a} di Napoli Federico II, Via Cintia, Monte S.~Angelo, 80126 Napoli, Italy}
\email{tonricci@unina.it}
\author[R.~Takahashi]{Ryo Takahashi}
\address[R.~Takahashi] {Mathematics Education, Faculty of Education, Nara University of Education,
Takabatake-cho, Nara-shi 630--8528, Japan}
\email{r-takaha@nara.edu-ac.jp}
\date{\today}
\begin{abstract}
We compare two mean field equations describing hydrodynamic 
turbulence in equilibrium, which are derived
under a deterministic vs.\ stochastic assumption
on the variable vortex intensity distribution. 
Mathematically, such equations correspond to 
non-local Liouville type problems, and
the critical temperature corresponds to the optimal Moser-Trudinger constant.
We consider the radial case and we assume that the inverse temperature is near its critical value. 
Under these assumptions we show that, unlike previously existing results, the qualitative properties of the
solution set in the deterministic case is more similar to the single vortex intensity case than
the stochastic case.
Some new variational interpretations of the 
value explicit values of the critical temperature are also provided.
\end{abstract}
\keywords{}
\subjclass[2000]{}
\maketitle
%%%%%%%%%%%%%%%%%%%%%%%%%%%%%%%%%%%%%%%%%%%%%%%%%%%%%%%%%%%%%%%%%%%%%%%%%%%%%%%
%%%%%%%%%%%%%%%%%%%%%%%%%%%%%%%%%%%%%%%%%%%%%%%%%%%%%%%%%%%%%%%%%%%%%%%%%%%%%%%~~~~~
%%%%%%%%%%%%%%%%%%%%%%%%%%%%%%%%%%%%%%%%%%%%%%%%%%%%%%%%%%%%%%%%%%%%%%%%%%%%%%%
%%%%%%%%%%%%%%%%%%%%%%%%%%%%%%%%%%%%%%%%%%%%%%%%%%%%%%%%%%%%%%%%%%%%%%%%%%%%%%%
\section{Introduction}
\label{sec:intro}
%%%%%%%%%%%%%%%%%%%%%%%%%%%%%%%%%%%%%%%%%%%%%%%%%%%%%%%%%%%%%%%%%%%%%%%%%%%%%%%
%%%%%%%%%%%%%%%%%%%%%%%%%%%%%%%%%%%%%%%%%%%%%%%%%%%%%%%%%%%%%%%%%%%%%%%%%%%%%%%
In recent years, in the context of the statistical mechanics of two-dimensional point vortices, as 
introduced in the pioneering work of Onsager~\cite{Onsager}, several
non-local elliptic equations including an exponential type nonlinearity
have been derived in order to take into account of variable vortex intensities.
See, e.g., \cite{Chavanis, Suzuki2008book} and the references therein.
\par
In particular, assuming that the distribution of the (normalized) vortex intensity $\al$
is determined by a probability distribution 
$\mathcal P(d\al)$, $\al\in[-1,1]$,
the following multi-species Boltzmann-Poisson type problem was derived in \cite{SawadaSuzuki},
see also \cite{EyinkSreenivasan, SopikSireChavanis2005}:
\begin{equation}
\label{eq:detP}
\begin{cases}
\begin{aligned}
-\Delta v=&\la\int_{[-1,1]}\al\frac{e^{\al v}}{\int_\Om e^{\al v}}\,\mathcal P(d\al)
&&\hbox{in\ }\Om\\
v=&0
&&\hbox{on\ }\pl\Om.
\end{aligned}
\end{cases}
\end{equation}
Here, $\Om\subset\rr^2$ is a smooth bounded domain, $v$ denotes the stream function of the turbulent Euler flow 
and $\la>0$ is a physical constant related to the inverse temperature.
It should be mentioned that problems of the form \eqref{eq:detP} 
also appear in the description of equilibrium states 
for Brownian gases and chemotaxis systems with different types of particle species, 
subject to the conserved mass constraint for each species, see 
\cite{Chavanis2007, Chavanis2012, Horstmann2011, KavallarisRicciardiZecca},
and the references therein, as well as in affine geometry, see
\cite{JevnikarYang}. 
Here, we are particularly interested in the \lq\lq two-species" case, where
$\calP$ is given by
\beq
\label{def:Ptg}
\mathcal P(d\al)=\tau\de_1(d\al)+(1-\tau)\de_{\ga}(d\al),
\eeq
for some $\tau,\ga\in(0,1)$. For such a choice of $\mathcal P$,
problem~\eqref{eq:detP} reduces to the problem
\begin{equation}
\label{eq:det}
\begin{cases}
\begin{aligned}
-\Delta v=&\la\left(\tau\frac{e^v}{\int_{\Om}e^v\,dx}
+(1-\tau)\ga\frac{e^{\ga v}}{\int_{\Om}e^{\ga v}\,dx}\right)
&&\hbox{in\ }\Om\\
v=&0
&&\hbox{on\ }\pl\Om.
\end{aligned}
\end{cases}
\end{equation}
\par
On the other hand, the following problem, formally similar to \eqref{eq:detP}, was derived in \cite{Neri} 
in order to describe stationary turbulent flows  
under a \lq\lq stochastic" assumption on the vorticity distribution:
\begin{equation}
\label{eq:stochP}
\left\{
\bal
-\Delta v=&\la\frac{\int_{[-1,1]}\al e^{\al v}\,\mathcal P(d\al)}
{\iint_{\Om\times[-1,1]}e^{\al v}\,dx\mathcal P(d\al)}
&&\hbox{in }\Om\\
v=&0&&\hbox{on }\pl\Om.
\eal
\right.
\end{equation}
In this case, it is assumed that the vortex intensities $\al\in[-1,1]$ of each point vortex 
are independent identically distributed random variables with
probability distribution $\mathcal P(d\al)$.
In the \lq\lq two-species case" where $\mathcal P$ is given by \eqref{def:Ptg},
problem~\eqref{eq:stochP} takes the form:
\begin{equation}
\label{eq:stoch}
\left\{
\bal
-\Delta v=&\la\frac{\tau e^v+(1-\tau)\ga e^{\ga v}}{\int_{\Om}\{\tau e^v+(1-\tau)e^{\ga v}\}\,dx}
&&\hbox{in }\Om\\
v=&0&&\hbox{on }\pl\Om.
\eal
\right.
\end{equation}
Problem~\eqref{eq:stochP} may also be viewed as the stationary state of
a multi-species Boltzmann-Poisson evolution system, where 
the masses of the individual population species
are allowed to vary, provided the total population mass is conserved, see \cite{KavallarisRicciardiZecca}.
\par
In the special case $\mathcal P(d\al)=\de_1(d\al)$, 
equation~\eqref{eq:detP} and equation~\eqref{eq:stochP} 
both reduce to the well-understood single-species Boltzmann-Poisson system,
also known in the mathematical literature as the as \lq\lq standard" mean field equation
\begin{equation}
\label{eq:std}
\left\{
\bal
\begin{aligned}
-\Delta v=&\la\frac{e^v}{\int_{\Om}e^v\,dx}
&&\hbox{in\ }\Om\\
v=&0
&&\hbox{on\ }\pl\Om,
\end{aligned}
\eal
\right.
\end{equation}
which has been extensively analyzed in view of its applications in biology, differential geometry
and physics.
In the context of turbulence, problem~\eqref{eq:std} was rigorously derived and analyzed in \cite{CLMP}.
See \cite{CSLinreview, Malchiodi, JostWangYeZhou, Suzuki2008book} and the references therein 
for results concerning problem~\eqref{eq:std}.
\par
In recent years, a considerable effort has been devoted to compare qualitative
properties of the solutions to problem~\eqref{eq:det} and problem~\eqref{eq:stoch},
see \cite{RiZe2016, RicciardiSuzuki, RicciardiTakahashi, 
JevnikarYang, DeMarchisRicciardi, ORS, RicciardiTakahashiZeccaZhang}
In particular, in \cite{RiZe2012, RiZe2016, DeMarchisRicciardi} it is shown that
problem~\eqref{eq:stoch} may be viewed as
a perturbation of \eqref{eq:std}, and in particular it shares the same mass quantization
properties and optimal Moser-Trudinger constant (which physically corresponds to the critical temperature) 
as \eqref{eq:std}, independently of $\mathcal P$, see also \cite{Suzuki2008book}. 
On the other hand, in \cite{ORS, RicciardiSuzuki, JevnikarYang, GuiJevnikarMoradifam}
it is shown that the blow-up masses and the optimal Moser-Trudinger constant 
for  \eqref{eq:det} significantly depend on $\mathcal P$, and that
\eqref{eq:det} is not in general a perturbation of \eqref{eq:std},
with the exception of the case where the probability measure $\mathcal P(d\al)$ is sufficiently \lq\lq close" 
to the Dirac mass $\de_1(d\al)$, 
see \cite{RicciardiTakahashiZeccaZhang}. 
Sign-changing cases were considered in \cite{JevnikarYang2017, PistoiaRicciardi2016, PistoiaRicciardi,
JostWangYeZhou, RicciardiTakahashi}.
\par
Our aim in this note is to exhibit a first situation where the deterministic
problem~\eqref{eq:det} behaves more similarly to the \lq\lq standard\rq\rq problem~\eqref{eq:std}
than the stochastic problem~\eqref{eq:stoch}.
More precisely, we shall show that when $\Omega$ is a disc and $\mathcal P$ is of the form~\eqref{def:Ptg},
the critical temperature for \eqref{eq:det} determines a threshold for existence of solutions,
exactly as it happens for \eqref{eq:std}, independently of $\tau,\ga$.
On the other hand, bifurcation diagram of the solution set near the critical temperature for \eqref{eq:stoch}
significantly depends on $\tau,\ga$, and does not necessarily determine a threshold for 
the existence of solutions.
\par
This note is organized as follows. 
In the following section, we state our main results, namely Theorem~\ref{thm:det} and Theorem~\ref{thm:Neri} below. 
Several preliminary properties necessary to the proofs are derived in Section~\ref{sec:prelims}. 
The proof of Theorem~\ref{thm:det} is provided in Section~\ref{sec:det}, and
the proof of Theorem~\ref{thm:Neri} is provded in Section~\ref{sec:stoch}. 
Finally, some remarks on the explicit value of the critical temperature 
for the deterministic problem~\eqref{eq:det}, namely the constant~$\bl$ 
deined in \eqref{eq:MTdet} below, are provided in the Appendix. 
%%%%%%%%%%%%%%%%%%%%%%%%%%%%%%%%%%%%%%%%%%%%%%%%%%%%%%%%%%%%%%%%%%%%%%%%%%%%%%%
%%%%%%%%%%%%%%%%%%%%%%%%%%%%%%%%%%%%%%%%%%%%%%%%%%%%%%%%%%%%%%%%%%%%%%%%%%%%%%%
\section{Statement of the main results}
\label{sec:results}
%%%%%%%%%%%%%%%%%%%%%%%%%%%%%%%%%%%%%%%%%%%%%%%%%%%%%%%%%%%%%%%%%%%%%%%%%%%%%%%
%%%%%%%%%%%%%%%%%%%%%%%%%%%%%%%%%%%%%%%%%%%%%%%%%%%%%%%%%%%%%%%%%%%%%%%%%%%%%%%
We first recall the variational formulations of 
problem~\eqref{eq:detP} and problem~\eqref{eq:stochP},
given by the functionals
\beq
\label{def:JdetP}
J_{\la,\mathcal P}^{(d)}(v)=\frac{1}{2}\int_\Om|\nabla v|^2
-\int_{[-1,1]}\left[\ln\int_\Om e^{\al v}\,dx\right]\mathcal P(d\al),
\qquad v\in H_0^1(\Om),
\eeq
and
\beq
\label{def:JstochP}
J_{\la,\mathcal P}^{(s)}(v)=\frac{1}{2}\int_\Om|\nabla v|^2
-\ln\left[\iint_{\Om\times[-1,1]} e^{\al v}\,dx\mathcal P(d\al)\right],
\qquad v\in H_0^1(\Om),
\eeq
respectively.
For $\calP(d\al)=\de_1(d\al)$, \eqref{def:JdetP} and \eqref{def:JstochP}
reduce to the variational functional for \eqref{eq:std},
given by 
\beq
\label{def:IMT}
I_\la(v)=\frac{1}{2}\int_\Om|\nabla v|^2\,dx-\la\ln\int_\Om e^v\,dx,
\qquad v\in H_0^1(\Om).
\eeq
In view of the Moser-Trudinger inequality \cite{Moser,Trudinger}
\[
\int_\Om e^{4\pi v^2/\int_\Om|\nabla v|^2\,dx}\le C_{MT}|\Om|
\qquad\hbox{for all\ }v\in H_0^1(\Om),
\]
where the constant $C_{MT}>0$ does not depend on the domain~$\Om$
and where the constant $4\pi$ is best possible, we derive that
\[
\label{ineq:MT}
\int_\Om e^{v}\,dx\le C_{MT}|\Om|e^{\frac{1}{16\pi}\int_\Om|\nabla v|^2\,dx}
\qquad\hbox{for all\ }v\in H_0^1(\Om).
\]
In particular, it follows that
\beq
\label{ineq:MT}
\inf_{H_0^1(\Om)}I_\la>-\infty\quad\hbox{if and only if }\la\le8\pi,
\eeq
and that for all $\la<8\pi$ a solution for \eqref{eq:std} may be obtained by
direct minimization of $I_\la$.
The limit value $\la=8\pi$ corresponds to the \lq\lq critical temperature"
for the single species case $\calP(d\al)=\de_1(d\al)$.
Such a solution is also unique \cite{Suzuki1992, BartolucciLin2014}. 
On the other hand, if $\Om$ is a disc, then a Pohozaev identity argument,
as may be found, e.g., in \cite{BebernesLacey}
(or, alternatively, the explicit form of the solutions, see \cite{Bandle}) implies 
that problem~\eqref{eq:std}
does not admit solutions when $\la\ge8\pi$.
Such existence/non-existence properties were employed in an essential way in \cite{KavallarisSuzuki}
to establish the biologically relevant
finite-time blow-up of solutions
to the corresponding evolution problem for \eqref{eq:std}. 
\par
Motivated by the recent results in \cite{KavallarisRicciardiZecca},
our aim in this note is to investigate similar properties for \eqref{eq:det} and \eqref{eq:stoch}.
In order to state our results precisely, we first recall the corresponding extensions
of the \lq\lq optimal Moser-Trudinger inequality"~\eqref{ineq:MT} to the functionals $J_{\la,\calP}^{(d)}$
and $J_{\la,\calP}^{(s)}$ as in \cite{RicciardiSuzuki, RiZe2012, Suzuki2008book}.
\par
Let $\blPd$ be defined by
\[
\blPd:=8\pi\inf\left\{\frac{\calP(K_\pm)}
{\left(\int_{K_\pm}\al\,\calP(d\al)\right)^2}:\ K_\pm\subset I_\pm,
\ \calP(K_\pm)>0
\right\},
\]
where $I_+:=[0,1]$ and $I_-:=[-1,0)$.
We say that $\calP$ is discrete if $\calP=\sum_{i=1}^m\tau_i\de_{\ga_i}$
for some $m\in\mathbb N$, $\tau_i,\in(0,1]$, $\sum_{i=1}^m\tau_i=1$, $\ga_i\in[-1,1]$.
The following optimal conditions for boundedness from below of $J_{\la,\calP}^{(d)}$
and $J_{\la,\calP}^{(s)}$ hold true.
\begin{prop}[\cite{RicciardiSuzuki, RiZe2012, ShafrirWolansky}]
\label{prop:MTP}
There holds:
\begin{enumerate}
  \item[(i)]
$\inf_{H_0^1(\Om)}J_{\la,\calP}^{(d)}>-\infty$ if $\la<\blPd$ and  
$\inf_{H_0^1(\Om)}J_{\la,\calP}^{(d)}=-\infty$ if $\la>\blPd$.
Moreover, if $\calP$ is discrete, then $\inf_{H_0^1(\Om)}J_{\la,\calP}^{(d)}\vert_{\la=\blPd}>-\infty$.
\item[(ii)]
There holds  
$\inf_{H_0^1(\Om)}J_{\la,\calP}^{(s)}>-\infty$ if and only if $\la\le 8\pi$.
\end{enumerate}
\end{prop}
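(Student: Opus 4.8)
The plan is to prove the two assertions of Proposition~\ref{prop:MTP} by separating, for each functional, the \emph{sufficiency} part (boundedness from below in the asserted range of $\la$), which follows from a Moser--Trudinger type inequality, from the \emph{necessity} part (unboundedness beyond that range), which follows by testing along the concentrating family
\[
v_\eps(x)=2\ln\frac{\rho^2+\eps^2}{|x|^2+\eps^2}\quad(x\in B_\rho(p)\subset\Om),\qquad v_\eps\equiv 0\ \text{in }\Om\setminus B_\rho(p),
\]
for which $v_\eps\ge 0$, $\int_\Om|\nabla v_\eps|^2\,dx=32\pi\ln\frac1\eps+O(1)$, and $\ln\int_\Om e^{\mu v_\eps}\,dx=(4\mu-2)\ln\frac1\eps+O(1)$ when $\mu>\frac12$ while $=o(\ln\frac1\eps)$ when $\mu\le\frac12$.

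\emph{Part (ii).} For the ``if'' direction, apply the Moser--Trudinger inequality to $\al v$: since $\al^2\le1$ on $[-1,1]$ this gives $\int_\Om e^{\al v}\,dx\le C_{MT}|\Om|\,e^{\frac1{16\pi}\int_\Om|\nabla v|^2}$ uniformly in $\al$, and, $\calP$ being a probability measure, $\ln\iint_{\Om\times[-1,1]}e^{\al v}\,dx\,\calP(d\al)\le\frac1{16\pi}\int_\Om|\nabla v|^2+\ln(C_{MT}|\Om|)$; hence $J_{\la,\calP}^{(s)}(v)\ge\big(\tfrac12-\tfrac{\la}{16\pi}\big)\int_\Om|\nabla v|^2-\la\ln(C_{MT}|\Om|)$, bounded below precisely when $\la\le 8\pi$. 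For the ``only if'' direction, given $\la>8\pi$ fix $\de\in(0,1)$ with $\la(1-\de)^2>8\pi$; since $1\in\supp\calP$ we have $\calP([1-\de,1])>0$, and since $v_\eps\ge0$, $\iint e^{\al s v_\eps}\,dx\,\calP(d\al)\ge\calP([1-\de,1])\int_\Om e^{(1-\de)s v_\eps}\,dx$, so by the asymptotics above $J_{\la,\calP}^{(s)}(s v_\eps)=\big(16\pi s^2-\la(4(1-\de)s-2)\big)\ln\frac1\eps+O(1)$; taking $s=\frac{\la(1-\de)}{8\pi}$ makes the bracket equal $\frac{\la}{4\pi}\big(8\pi-\la(1-\de)^2\big)<0$, whence $\inf J_{\la,\calP}^{(s)}=-\infty$.

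\emph{Part (i), necessity.} First note that in the definition of $\blPd$ one may restrict to half-intervals $K_+=(a,1]\subset I_+$ and $K_-=[-1,a)\subset I_-$, since among sets of a given $\calP$-mass the quantity $|\int_{K_\pm}\al\,\calP(d\al)|$ is largest when the mass sits nearest $\pm1$. Testing $J_{\la,\calP}^{(d)}$ along $s v_\eps$ and keeping the leading order (which comes only from $\al>\frac1{2s}$) gives, with $A_s:=(\frac1{2s},1]$,
\[
J_{\la,\calP}^{(d)}(s v_\eps)=\Big(16\pi s^2-4\la s\!\int_{A_s}\!\al\,\calP(d\al)+2\la\,\calP(A_s)\Big)\ln\tfrac1\eps+o\big(\ln\tfrac1\eps\big);
\]
one checks that the infimum over $s>0$ of this coefficient is negative for $\la>\blPd$ --- equivalently, optimizing over single concentrating bubbles recovers exactly the constant $\blPd$, the $K_-$ part being captured by testing along $-s v_\eps$ --- so $\inf J_{\la,\calP}^{(d)}=-\infty$ whenever $\la>\blPd$. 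For $\calP$ as in \eqref{def:Ptg} this is fully explicit: the only half-intervals meeting $\supp\calP$ are $\{1\}$ and $\{\ga,1\}$, and one recovers $\blPd=8\pi\min\{1/\tau,\,1/(\tau+(1-\tau)\ga)^2\}$.

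\emph{Part (i), sufficiency --- the main obstacle.} The substantive point is $\inf J_{\la,\calP}^{(d)}>-\infty$ for $\la<\blPd$, and at $\la=\blPd$ when $\calP$ is discrete. The crude estimate $\ln\int_\Om e^{\al v}\le\frac{\al^2}{16\pi}\int_\Om|\nabla v|^2+O(1)$ only yields boundedness for $\la\le 8\pi/\!\int_{[-1,1]}\al^2\,\calP(d\al)$, which is in general strictly smaller than $\blPd$; one must instead establish the sharp weighted Moser--Trudinger inequality of Shafrir--Wolansky / Ricciardi--Suzuki type,
\[
\la\int_{[-1,1]}\Big[\ln\int_\Om e^{\al v}\,dx\Big]\calP(d\al)\le\tfrac12\int_\Om|\nabla v|^2+C_\la\qquad(\la<\blPd).
\]
I would argue by contradiction: if $J_{\la,\calP}^{(d)}(v_n)\to-\infty$ then $t_n^2:=\int_\Om|\nabla v_n|^2\to\infty$; setting $w_n=v_n/t_n$ and extracting $|\nabla w_n|^2\,dx\rightharpoonup\mu$, a concentration--compactness / blow-up analysis shows that $\mu$ is a finite sum of Dirac masses of total mass $\le1$, that near each concentration point the localized Moser--Trudinger inequality controls the corresponding part of $\int_{[-1,1]}[\ln\int_\Om e^{\al v_n}\,dx]\,\calP(d\al)$ by $\frac1{16\pi}$ (or $\frac1{8\pi}$ at a boundary point) times the local Dirichlet energy, weighted by the relevant piece of $\calP$, and that the resulting leading balance of energies reduces exactly to the minimization defining $\blPd$, contradicting $\la<\blPd$. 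When $\calP$ is discrete that minimization runs over the finitely many subsets of atoms of $\calP$ in $I_\pm$, hence is attained, which closes the estimate at $\la=\blPd$; this is the only place discreteness is used. This concentration step, already carried out in \cite{ShafrirWolansky,RicciardiSuzuki}, is where essentially all the work lies.
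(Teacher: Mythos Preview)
The paper does not prove Proposition~\ref{prop:MTP}; it is quoted as a known result from \cite{RicciardiSuzuki, RiZe2012, ShafrirWolansky} and used without proof. The only argument in the paper touching this statement is in the Appendix, where Proposition~\ref{prop:Jblowdown} establishes the \emph{necessity} direction of (i) in the special two-species case $\calP=\tau\de_1+(1-\tau)\de_\ga$ by evaluating $\Jd$ along $t_\ga P U_\de$ with the Liouville bubble $U_\de$ and optimizing over the scalar $t_\ga$ (Lemmas~\ref{lem:PUdexp}--\ref{lem:gatausecond}). Your necessity arguments in both (i) and (ii), based on the concentrating family $s v_\eps$, are exactly this computation carried out for general $\calP$, and your asymptotics and algebra in (ii) are correct; in (i) the sentence ``one checks that the infimum over $s>0$ of this coefficient is negative for $\la>\blPd$'' hides a nontrivial step, since $A_s=(1/(2s),1]$ moves with $s$, but the conclusion is right and the Appendix shows precisely how the check goes in the two-species case.

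For the sufficiency directions you do what the paper does: for (ii) you give the one-line Moser--Trudinger bound, and for (i) you correctly identify that the crude estimate falls short of $\blPd$ and that the sharp bound needs the concentration--compactness analysis of \cite{ShafrirWolansky, RicciardiSuzuki}; the paper simply invokes those references. One small caveat: your (ii)-necessity argument uses $1\in\supp\calP$, which is the standard normalization implicit in the paper (``normalized vortex intensity~$\al$'') but not stated in the proposition itself; without it the constant $8\pi$ would shift. Otherwise your outline is sound and matches both the cited literature and the partial computation the paper provides.
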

In the special case where $\calP$ is of the two-species form~\eqref{def:Ptg},
the functional $J_{\la,\calP}^{(d)}$ takes the form
\beq
\label{def:Jdet}
\Jd(v)=\frac{1}{2}\int_\Om|\nabla v|^2\,dx-\la\tau\ln\int_\Om e^v\,dx
-\la(1-\tau)\ln\int_\Om e^{\ga v}\,dx,
\qquad v\in H_0^1(\Om),
\eeq
corresponding to the variational functional for \eqref{eq:det}.
In view of Proposition~\ref{prop:MTP}--(i),
the resulting Moser-Trudinger inequality for $\Jd$ is given by
\beq
\label{ineq:JdMT}
\inf_{H_0^1(\Om)}\Jd>-\infty\quad\hbox{if and only if }\la\le\bl
\eeq
where $\bl=\blPd\vert_{\calP=\tau\de_1+(1-\tau)\de_\ga}$ is given by:
\begin{equation}
\label{eq:MTdet}
\bl=8\pi\min\left\{\frac{1}{\{\tau+(1-\tau)\ga\}^2},\frac{1}{\tau}\right\}
=
\begin{cases}
\frac{8\pi}{\tau},
&\hbox{if }0<\ga\le\frac{\sqrt\tau}{1+\sqrt\tau}\\
\frac{8\pi}{\{\tau+(1-\tau)\ga\}^2},
&\hbox{if }\frac{\sqrt\tau}{1+\sqrt\tau}<\ga<1.
\end{cases}
\end{equation}
\begin{rmk}
Roughly speaking, 
the case $\bl=8\pi/\tau$ corresponds to the case where the term $e^{\ga v}$
is a perturbation of the term $e^v$, whereas the case $\bl=8\pi/\{\tau+(1-\tau)\ga\}^2$ corresponds 
to the situation where
both terms $e^v$, $e^{\ga v}$ influence the Moser-Trudinger constant. 
When $\ga\ge1/2$, straightforward considerations 
involving \lq\lq Liouville bubbles" imply that both nonlinearities should 
concentrate along a blowing-up sequence,
so that the first case is natural.
On the other hand, the threshold value satisfies 
$\sqrt{\tau}/(1+\sqrt{\tau})<1/2$ for all $\tau\in(0,1)$.
So, in the interval $(\sqrt{\tau}/(1+\sqrt{\tau}),1/2)$
it the functional is also influeced by the lower-order term, although it does not blow up along
the sequence of minimizers.
See the Appendix for some considerations which motivate the role of $\tau$
in this context.
\end{rmk}
In particular, for all $\la<\bl$ there exists a solution to \eqref{eq:det} corresponding to
the minimum of $\Jd$. Our first result states that if $\Om$ is the unit disc, similarly as for
the \lq\lq standard" case $\tau=1$, the optimal Moser-Trudinger constant $\bl$ provides
a \emph{sharp threshold} for the existence of solutions to \eqref{eq:det}:
\begin{thm}
\label{thm:det}
Suppose $\Om=\BB(0)$.
For any $\tau,\ga\in(0,1)$, a solution to \eqref{eq:det}
exists if and only if $\la<\bl$.
\end{thm}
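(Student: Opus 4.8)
The \lq\lq if\rq\rq\ direction is the soft one. For $\la<\bl$ the functional $\Jd$ of \eqref{def:Jdet} is bounded below by \eqref{ineq:JdMT} and, in fact, coercive on $H_0^1(\BB(0))$, and it is weakly lower semicontinuous (the Dirichlet term by convexity, and $v\mapsto\int_{\BB(0)}e^{v}\,dx$, $v\mapsto\int_{\BB(0)}e^{\ga v}\,dx$ are weakly continuous by the Moser-Trudinger inequality together with the compactness of $H_0^1\hookrightarrow L^{p}$). Hence $\Jd$ attains its infimum, and the minimizer solves \eqref{eq:det}. The content of the theorem is the reverse implication: \emph{if $v$ solves \eqref{eq:det} on $\BB(0)$, then $\la<\bl$}.

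I would first reduce to the radial ODE. Since the right-hand side of \eqref{eq:det} is positive, $v>0$ in $\BB(0)$ by the strong maximum principle, and $v$ is smooth up to $\pl\BB(0)$ by elliptic regularity; writing $A:=\int_{\BB(0)}e^{v}$, $B:=\int_{\BB(0)}e^{\ga v}$ (so $A,B>\pi=|\BB(0)|$ because $v>0$), the equation reads $-\Delta v=\frac{\la\tau}{A}e^{v}+\frac{\la(1-\tau)\ga}{B}e^{\ga v}=:f(v)$ with $f$ smooth, positive and strictly increasing. Hence Gidas-Ni-Nirenberg applies: $v=v(r)$ is radial with $v'(r)<0$ for $0<r<1$, and integrating the equation gives $v'(1)=-\frac{\la(\tau+(1-\tau)\ga)}{2\pi}$. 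The plan is then to establish two independent upper bounds on $\la$ whose minimum is $\bl$. The first is the Pohozaev identity for \eqref{eq:det} on the disc: multiplying by $x\cdot\nabla v$ and integrating, the boundary term equals $\frac12\int_{\pl\BB(0)}|\pl_\nu v|^{2}=\frac{\la^{2}(\tau+(1-\tau)\ga)^{2}}{4\pi}$, while the bulk term equals $2\int_{\BB(0)}H(v)=2\la-2\la\pi\big(\frac{\tau}{A}+\frac{1-\tau}{B}\big)$ with $H(v)=\frac{\la\tau}{A}(e^{v}-1)+\frac{\la(1-\tau)}{B}(e^{\ga v}-1)$. This yields
\[
\frac{\la(\tau+(1-\tau)\ga)^{2}}{8\pi}=1-\pi\Big(\frac{\tau}{A}+\frac{1-\tau}{B}\Big)<1,
\]
hence $\la<8\pi/(\tau+(1-\tau)\ga)^{2}$. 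By \eqref{eq:MTdet} this is precisely $\bl$ when $\ga\ge\sqrt{\tau}/(1+\sqrt{\tau})$, so that regime is finished.

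The regime $\ga<\sqrt{\tau}/(1+\sqrt{\tau})$, where $\bl=8\pi/\tau$, requires the sharper bound $\la\tau<8\pi$, and this is where the argument is least obvious. Applying the multiplier $x\cdot\nabla v$ directly to the supersolution inequality $-\Delta v\ge\frac{\la\tau}{A}e^{v}$ fails, because $x\cdot\nabla v=rv'(r)<0$ reverses the inequality. Instead, I would peel off the Newtonian potential of the $e^{\ga v}$-term. Let $z$ solve $-\Delta z=\frac{\la(1-\tau)\ga}{B}e^{\ga v}$ in $\BB(0)$, $z=0$ on $\pl\BB(0)$; then $z$ is radial, positive and strictly decreasing, and $u:=v-z$ (radial, vanishing on $\pl\BB(0)$, positive by superharmonicity of $v-z$) satisfies
\[
-\Delta u=\frac{\la\tau}{A}e^{v}=Q\,e^{u},\qquad Q:=\frac{\la\tau}{A}e^{z},
\]
with $\int_{\BB(0)}Qe^{u}=\la\tau$, so $u'(1)=-\frac{\la\tau}{2\pi}$ and $Q\equiv\frac{\la\tau}{A}$ on $\pl\BB(0)$. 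The Pohozaev identity for this weighted single-species equation now gives
\[
\frac{\la^{2}\tau^{2}}{4\pi}=2\la\tau+\int_{\BB(0)}(x\cdot\nabla Q)\,e^{u}\,dx-\frac{2\pi\la\tau}{A},
\]
and here the extra term has the favorable sign: $Q$ is radial and decreasing, so $x\cdot\nabla Q=rQ'(r)\le0$, while $\frac{2\pi\la\tau}{A}>0$; therefore $\frac{\la^{2}\tau^{2}}{4\pi}<2\la\tau$, i.e.\ $\la\tau<8\pi$. Combining the two bounds, $\la<\min\{8\pi/(\tau+(1-\tau)\ga)^{2},\,8\pi/\tau\}=\bl$.

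I expect the main obstacle to be exactly this second bound — specifically, recognizing that subtracting the potential $z$ turns the \lq\lq$e^{v}$-species\rq\rq\ into a genuine weighted Liouville equation for $u=v-z$ that carries exactly the mass $\la\tau$, and that the weight $Q=\frac{\la\tau}{A}e^{z}$ inherits radial monotonicity so that the extra Pohozaev term is nonpositive. The symmetrization step, the boundary computations, and the \lq\lq if\rq\rq\ part should all be routine. It is worth noting that the two thresholds coincide at $\ga=\sqrt{\tau}/(1+\sqrt{\tau})$ (there $\tau+(1-\tau)\ga=\sqrt{\tau}$), so together with the existence half this shows $\bl$ is sharp.
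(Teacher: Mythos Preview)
Your proof is correct. The existence direction and the first bound $\la<8\pi/(\tau+(1-\tau)\ga)^2$ via the Pohozaev identity coincide with the paper's argument (the paper phrases the boundary term via the Schwarz inequality rather than the explicit radial computation, but on the disc these are the same).

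Where you genuinely diverge is in obtaining $\la\tau<8\pi$. The paper does not run a second Pohozaev identity; instead it uses Proposition~\ref{prop:reddet} to rescale $v$ into a solution $z$ of $-\Delta z=e^z+e^{\ga z}$ on a ball $B_\si$ with $\int_{B_\si}e^z=\la\tau$, observes that $z$ agrees on $B_\si$ with the unique entire solution $z_{\al^*}$ of \eqref{eq:PT}, and then invokes the inequality $m_1(\al)=\int_{\rr^2}e^{z_\al}<8\pi$, which in turn rests on the Poliakovsky--Tarantello classification and the Chen--Guo--Spirn energy identity (Lemmas~\ref{lem:PT}--\ref{lem:CGS}). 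Your route --- subtracting the Newtonian potential $z$ of the $e^{\ga v}$-term so that $u=v-z$ solves $-\Delta u=Q e^u$ with the radially nonincreasing weight $Q=\tfrac{\la\tau}{A}e^z$, and then reading off $\la\tau<8\pi$ from the weighted Pohozaev identity because $x\cdot\nabla Q\le0$ --- is entirely self-contained and avoids any appeal to the whole-plane problem. The paper's reduction, on the other hand, is machinery it reuses for the stochastic problem (Theorem~\ref{thm:Neri}), so there it earns its keep; for Theorem~\ref{thm:det} alone your argument is the more economical one.
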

On the other hand, the variational functional for 
\eqref{eq:stoch} is given by
\beq
\label{def:Jstoch}
\Js(v)=\frac{1}{2}\int_\Om|\nabla v|^2\,dx
-\la\ln\left(\tau\int_\Om e^v\,dx+(1-\tau)\int_\Om e^{\ga v}\,dx\right),
\qquad v\in H_0^1(\Om),
\eeq
corresponding to the functional $J_{\la,\calP}^{(s)}$ with $\calP$ satisfying
the two-species form \eqref{def:Ptg}.
In view of Proposition~\ref{prop:MTP}--(ii), see also \cite{Suzuki2008book}, 
we have that the optimal Moser-Trudinger constant for $\Js$ is independent of $\tau,\ga$:
\[
\inf_{H_0^1(\Om)}\Js>-\infty\quad\hbox{if and only if }\la\le8\pi.
\]
Our next result shows that, on the contrary, the solution set of \eqref{eq:stoch} for $\la$ 
near $8\pi$ 
significantly depends on the parameters $\tau,\ga$.
Indeed, for fixed $\tau,\ga$ let
\beq
\label{def:lastar}
\lstar:=\sup\{\la>0: \hbox{\eqref{eq:stoch} admits a solution}\}.
\eeq
By direct minimization of $\Js$ and the Moser-Trudinger inequality in \cite{RiZe2012},
we have $\lstar\ge8\pi$. 
If $\Om$ is the unit disc, for small values of $\tau$, the \emph{strict} inequality holds true.
\begin{thm}
\label{thm:Neri}
Suppose $\Om=\BB(0)$.
For every fixed $\ga\in(0,1)$ there exists 
$\underline\tau_\ga\in(0,1)$
such that $\la_{\tau,\ga}^*>8\pi$ for all $\tau\in(0,\underline\tau_\ga)$.
\end{thm}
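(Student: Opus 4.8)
The plan is to obtain, for a suitable $\la_0>8\pi$ depending on $\ga$, a solution of \eqref{eq:stoch} for all small $\tau>0$ by perturbing off the endpoint $\tau=0$, where \eqref{eq:stoch} degenerates to a single-species equation that is still solvable above $8\pi$. Fix $\ga\in(0,1)$ and pick $\la_0\in(8\pi,8\pi\ga^{-2})$; this interval is nonempty precisely because $\ga<1$. At $\tau=0$, equation \eqref{eq:stoch} with $\la=\la_0$ reads $-\Delta v=\la_0\ga\,e^{\ga v}/\int_{\BB}e^{\ga v}\,dx$ in $\BB$, $v=0$ on $\pl\BB$, and the substitution $w=\ga v$ turns it into the standard problem \eqref{eq:std} on the unit disc with parameter $\mu_0:=\la_0\ga^{2}$, which satisfies $0<\mu_0<8\pi$. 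Hence there is a radial solution $w_0$ of \eqref{eq:std} at $\mu_0$, obtained by minimizing $I_{\mu_0}$ and unique by \cite{Suzuki1992,BartolucciLin2014}, and $v_0:=\ga^{-1}w_0$ is a radial solution of \eqref{eq:stoch} at $(\tau,\la)=(0,\la_0)$.

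I would then invoke the implicit function theorem. In radial Schauder spaces $X:=\{v\in C^{2,\al}_{\mathrm{rad}}(\overline{\BB}):v=0\ \text{on}\ \pl\BB\}$ and $Y:=C^{0,\al}_{\mathrm{rad}}(\overline{\BB})$ (for some fixed $\al\in(0,1)$), set
\[
F(\tau,v):=-\Delta v-\la_0\,\frac{\tau e^{v}+(1-\tau)\ga e^{\ga v}}{\int_{\BB}\bigl(\tau e^{v}+(1-\tau)e^{\ga v}\bigr)\,dx}\in Y,\qquad(\tau,v)\in\rr\times X,
\]
which is real-analytic near $(0,v_0)$ since there the denominator equals $\int_{\BB}e^{\ga v_0}\,dx>0$; solutions of \eqref{eq:stoch} with $\la=\la_0$ are exactly the zeros of $F(\tau,\cdot)$. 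Differentiating at $(0,v_0)$ and writing $m:=\int_{\BB}e^{w_0}\,dx$ gives
\[
D_vF(0,v_0)[\phi]=-\Delta\phi-\mu_0\Bigl(\frac{e^{w_0}}{m}\,\phi-\frac{e^{w_0}}{m^{2}}\int_{\BB}e^{w_0}\phi\,dx\Bigr),\qquad\phi\in X,
\]
which is precisely the linearization of \eqref{eq:std} at $w_0$ for the value $\mu_0$. The second term is a compact operator $X\to Y$, so $D_vF(0,v_0)$ is a compact perturbation of the isomorphism $-\Delta\colon X\to Y$, hence Fredholm of index zero, and therefore an isomorphism as soon as it is injective, i.e.\ as soon as $w_0$ is non-degenerate in the radial class.

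Establishing this non-degeneracy is the only substantial point. I would derive it from the explicit radial solutions on the disc (cf.\ \cite{Bandle} and Section~\ref{sec:prelims}): with $w_0(r)=2\log\frac{\de+1}{\de+r^{2}}$, where $\de=(8\pi-\mu_0)/\mu_0>0$, one has $\mu_0e^{w_0}/m=8\de/(\de+r^{2})^{2}$, and for $\phi\in X$ with $D_vF(0,v_0)[\phi]=0$, setting $c:=\frac1m\int_{\BB}e^{w_0}\phi\,dx$, the function $\phi$ solves the radial ODE $-\Delta\phi-\tfrac{8\de}{(\de+r^{2})^{2}}\phi=-\tfrac{8\de}{(\de+r^{2})^{2}}\,c$ and is regular at the origin, so
\[
\phi=A\,\frac{\de-r^{2}}{\de+r^{2}}+c
\]
for some $A\in\rr$ — the first summand being the dilation mode of the Liouville bubble and the constant coming from the scaling invariance $w\mapsto w+\mathrm{const}$ of \eqref{eq:std}. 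The defining identity for $c$ then reads $\frac Am\int_{\BB}e^{w_0}\frac{\de-r^{2}}{\de+r^{2}}\,dx=0$, and since $\int_{\BB}e^{w_0}\frac{\de-r^{2}}{\de+r^{2}}\,dx=\pi\neq0$ (a one-line computation), we get $A=0$, whence $\phi\equiv c$ and finally $c=\phi(1)=0$. Thus $w_0$ is non-degenerate in the radial class for every $\mu_0\in(0,8\pi)$, and $D_vF(0,v_0)$ is an isomorphism.

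Granting this, the implicit function theorem applied to $F$ at $(0,v_0)$ yields $\underline\tau_\ga>0$ and a $C^{1}$ branch $\tau\mapsto v_\tau\in X$, $\tau\in[0,\underline\tau_\ga)$, with $v_\tau|_{\tau=0}=v_0$ and $F(\tau,v_\tau)=0$; i.e.\ $v_\tau$ is a radial solution of \eqref{eq:stoch} with $\la=\la_0$ for each such $\tau$. Hence $\lstar\ge\la_0>8\pi$ for all $\tau\in(0,\underline\tau_\ga)$, which is the assertion; note that no uniformity in $\ga$ is required, since $\la_0$ and $\underline\tau_\ga$ may depend on $\ga$. The remaining ingredients — analyticity of $F$, the Fredholm property, and the implicit function theorem — are routine, so the crux is the non-degeneracy step, which as sketched reduces to the explicit structure of radial solutions on the disc.
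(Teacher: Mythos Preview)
Your proof is correct and takes a genuinely different route from the paper's.

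The paper does not perturb off $\tau=0$ via the implicit function theorem. Instead it relies on the reduction machinery of Section~\ref{sec:prelims} and Section~\ref{sec:stoch}: by Propositions~\ref{prop:redstoch} and~\ref{prop:redconv}, solutions of \eqref{eq:stochB} correspond to solutions $z_\al$ of the entire-space problem~\eqref{eq:PT} restricted to balls $B_{\sitg(\al)}$ cut off at the level $\btg$, with $\la=\Latg(\al)=\int_{B_{\sitg(\al)}}(e^{z_\al}+\ga^{-1}e^{\ga z_\al})$. Since $\btg\to-\infty$ as $\tau\to0^+$, the cutoff radius $\sitg(\al)\to+\infty$ for any fixed $\al$, so $\Latg(\al)$ tends to the full mass $m_1(\al)+\ga^{-1}m_\ga(\al)$, which exceeds $8\pi$ by the energy identity of Lemma~\ref{lem:CGS} together with Lemma~\ref{lem:PT}. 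Picking one such $\al_0$ and taking $\tau$ small then yields $\lstar\ge\Latg(\al_0)>8\pi$.

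Your argument is more self-contained: it avoids the cosmic-string reduction and the results of \cite{PoliakovskyTarantello,ChenGuoSpirn} entirely, trading them for the explicit radial Liouville formula on the disc and a clean non-degeneracy computation (your evaluation $\int_{\BB}e^{w_0}\frac{\de-r^2}{\de+r^2}\,dx=\pi$ is correct). The paper's approach, by contrast, is embedded in a framework that also yields the bifurcation picture of Proposition~\ref{prop:siLa} (continuity of $\sitg,\Latg$ in $\al$, the limit $\Latg(\al)\to8\pi$ as $\al\to+\infty$), which is more than what Theorem~\ref{thm:Neri} alone requires but illuminates the global structure of the solution set. Your method gives a quicker proof of the stated theorem; theirs situates it within a broader description of the branch.
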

In particular, Theorem~\ref{thm:det}
and Theorem~\ref{thm:Neri} imply that the bifurcation diagram 
for the deterministic problem \eqref{eq:det}
is qualitatively more similar to the bifurcation diagram for \eqref{eq:std} 
than the corresponding diagram for \eqref{eq:stoch}.
This appears to be the first situation where \eqref{eq:det} is more similar to
\eqref{eq:std} than \eqref{eq:stoch}.
\par
Henceforth we assume $\Om=\BB(0)$.
\subsubsection*{Notation}
We omit the integration variable when it is clear from the context.
For $\sigma>0$ we denote by $B_\sigma$ the disc centered at $0$ with radius $\si$.
We denote by $C>0$ a general constant whose actual value is allowed to vary
from line to line.
%%%%%%%%%%%%%%%%%%%%%%%%%%%%%%%%%%%%%%%%%%%%%%%%%%%%%%%%%%%%%%%%%%%%%%%%%%%%%%%
%%%%%%%%%%%%%%%%%%%%%%%%%%%%%%%%%%%%%%%%%%%%%%%%%%%%%%%%%%%%%%%%%%%%%%%%%%%%%%%
\section{Preliminaries}
\label{sec:prelims}
%%%%%%%%%%%%%%%%%%%%%%%%%%%%%%%%%%%%%%%%%%%%%%%%%%%%%%%%%%%%%%%%%%%%%%%%%%%%%%%
%%%%%%%%%%%%%%%%%%%%%%%%%%%%%%%%%%%%%%%%%%%%%%%%%%%%%%%%%%%%%%%%%%%%%%%%%%%%%%%~~~~~~
We begin by establishing relations between the non-local mean field problems 
\eqref{eq:det} and \eqref{eq:stoch} defined on $\BB$, namely problem
\begin{equation}
\label{eq:detB}
\begin{cases}
\begin{aligned}
-\Delta v=&\la\left(\tau\frac{e^v}{\int_{\BB}e^v}
+(1-\tau)\ga\frac{e^{\ga v}}{\int_{\BB}e^{\ga v}}\right)
&&\hbox{in\ }\BB\\
v=&0
&&\hbox{on\ }\pl\BB,
\end{aligned}
\end{cases}
\end{equation}
and problem
\begin{equation}
\label{eq:stochB}
\left\{
\bal
-\Delta v=&\la\frac{\tau e^v+(1-\tau)\ga e^{\ga v}}{\int_{\BB}\{\tau e^v+(1-\tau)e^{\ga v}\}}
&&\hbox{in }\BB\\
v=&0&&\hbox{on }\pl\BB,
\eal
\right.
\end{equation}
and the following local problem defined 
on whole space:
\begin{equation}
\label{eq:PT}
\begin{cases}
-\Delta z=e^z+e^{\ga z}
&\hbox{in\ }\rr^2\\
z(0)=\max_{\rr^2}z=\al\in\rr\\
\int_{\rr^2}\{e^z+e^{\ga z}\}=m(\al)<+\infty,
\end{cases}
\end{equation}
which is a special case of the cosmic string problem considered 
in \cite{PoliakovskyTarantello, ChenGuoSpirn}.
We note that solutions to \eqref{eq:detB} and \eqref{eq:stochB}
are radially symmetric \cite{GNN}. 
\par
Indeed, our aim in this section is to establish the following properties.
\begin{prop}
[Reduction of the deterministic problem to \eqref{eq:PT}]
\label{prop:reddet}
Let $v$ be a solution to problem~\eqref{eq:detB}.
Then, the function $z$ defined by
\beq
\label{def:zdet}
z(y)=v(\frac{y}{\si})+\frac{1}{1-\ga}\ln\left[\frac{\tau}{(1-\tau)\ga}
\frac{\int_{\BB}e^{\ga v}}{\int_{\BB}e^v}\right],
\qquad y\in B_\si
\eeq
where $\si$ is defined by
\[
\si^{2(1-\ga)}=\frac{\la^{1-\ga}(1-\tau)\ga}{\tau^\ga}
\frac{(\int_{\BB}e^v)^{\ga}}{\int_{\BB}e^{\ga v}},
\]
is a solution to the problem
\beq
\label{eq:zdet}
\bca
-\Delta z=e^z+e^{\ga z}&\hbox{in }\Bs\\
\int_{\Bs}e^z=\la\tau\\
\int_{\Bs}e^{\ga z}=\la(1-\tau)\ga
\eca
\eeq
satisfying
\[
z(0)=\max_{\rr^2}z=v(0)+\frac{1}{1-\ga}\ln\left[\frac{\tau}{(1-\tau)\ga}
\frac{\int_{\BB}e^{\ga v}}{\int_{\BB}e^v}\right].
\]
\end{prop}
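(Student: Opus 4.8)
The plan is a direct change of variables. Writing $z(y)=v(y/\si)+c$ for a dilation parameter $\si>0$ and an additive constant $c\in\rr$ to be determined, I expect both $\si$ and $c$ to be \emph{forced} by requiring that $z$ satisfy $-\Delta z=e^z+e^{\ga z}$ together with the two prescribed masses; the computation should close precisely because of the exponent $2(1-\ga)$ in the definition of $\si$, which is also why one needs $\ga\neq 1$ (consistent with $\ga\in(0,1)$).

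First I would record the qualitative properties of $v$. Since $v$ solves \eqref{eq:detB}, whose right-hand side is strictly positive, and $v=0$ on $\pl\BB$, the maximum principle gives $v>0$ in $\BB$; by \cite{GNN} (cf.\ the discussion preceding the proposition) $v$ is radial, and being radial and superharmonic it is nonincreasing in $|y|$, so that $\max_{\overline{\BB}}v=v(0)$. In particular all the integrals appearing below are finite and positive, so the constant in \eqref{def:zdet} and the number $\si$ are well defined.

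Next I would compute. From $z(y)=v(y/\si)+c$ one has $\Delta z(y)=\si^{-2}(\Delta v)(y/\si)$, and substituting $e^{v(y/\si)}=e^{-c}e^{z(y)}$ and $e^{\ga v(y/\si)}=e^{-\ga c}e^{\ga z(y)}$ into the equation for $v$ gives, for $y\in B_\si$,
\[
-\Delta z(y)=\frac{\la}{\si^2}\left(\frac{\tau e^{-c}}{\int_{\BB}e^v}\,e^{z(y)}+\frac{(1-\tau)\ga e^{-\ga c}}{\int_{\BB}e^{\ga v}}\,e^{\ga z(y)}\right).
\]
Hence $-\Delta z=e^z+e^{\ga z}$ in $B_\si$ if and only if the two bracketed coefficients both equal $1$, i.e.
\[
e^{c}=\frac{\la\tau}{\si^2\int_{\BB}e^v},\qquad e^{\ga c}=\frac{\la(1-\tau)\ga}{\si^2\int_{\BB}e^{\ga v}}.
\]
Raising the first relation to the power $\ga$ and equating it with the second eliminates $c$ and yields a single equation for $\si$, solved exactly by
\[
\si^{2(1-\ga)}=\frac{\la^{1-\ga}(1-\tau)\ga}{\tau^{\ga}}\cdot\frac{(\int_{\BB}e^v)^{\ga}}{\int_{\BB}e^{\ga v}},
\]
which is the value in the statement; dividing the two relations instead eliminates $\si$ and gives $e^{(1-\ga)c}=\frac{\tau}{(1-\tau)\ga}\frac{\int_{\BB}e^{\ga v}}{\int_{\BB}e^v}$, i.e.\ exactly the constant in \eqref{def:zdet}.

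Finally, for the mass constraints I would change variables $x=y/\si$: using the two relations for $e^c$ and $e^{\ga c}$,
\[
\int_{B_\si}e^z\,dy=\si^2e^{c}\int_{\BB}e^v\,dx=\la\tau,\qquad
\int_{B_\si}e^{\ga z}\,dy=\si^2e^{\ga c}\int_{\BB}e^{\ga v}\,dx=\la(1-\tau)\ga,
\]
which are the two normalizations in \eqref{eq:zdet}. Since $v$ attains its maximum at the origin, so does $z$, with $z(0)=v(0)+c$ equal to the claimed expression. The computation is essentially routine; the only point that requires care is the compatibility of the two normalizations of the additive constant $c$, and it is exactly this compatibility that pins down the value of $\si$.
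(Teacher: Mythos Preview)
Your argument is correct and follows essentially the same route as the paper: one writes $z(y)=v(y/\si)+c$, imposes $-\Delta z=e^z+e^{\ga z}$, and reads off the two compatibility conditions on $c$ and $\si$, then checks the masses by change of variables. The only organizational difference is that the paper first isolates this computation as a general lemma for $-\Delta v=a e^v+b e^{\ga v}$ (applied here with $a=\la\tau/\int_{\BB}e^v$, $b=\la(1-\tau)\ga/\int_{\BB}e^{\ga v}$) so that it can be reused verbatim for the stochastic reduction; your direct calculation is equivalent.
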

Similarly, we also have:
\begin{prop}
[Reduction of the stochastic problem to \eqref{eq:PT}]
\label{prop:redstoch}
Let $v$ be a solution to problem~\eqref{eq:stochB}.
Then, the function $z$ defined by
\beq
\label{def:zstoch}
z(y)=v(\frac{y}{\si})+\frac{1}{1-\ga}\ln\left[\frac{\tau}{(1-\tau)\ga}\right],
\qquad y\in B_\si
\eeq
where $\si$ is defined by
\[
\si^{2(1-\ga)}=\frac{\la^{1-\ga}(1-\tau)\ga}{\tau^\ga}
\left(\frac{1}{\int_{\BB}\{\tau e^v+(1-\tau)e^{\ga v}\}}\right)^{1-\ga},
\]
is a solution to the problem
\beq
\label{eq:zstoch}
\bca
-\Delta z=e^z+e^{\ga z},
&\hbox{in }\Bs\\
z=\frac{1}{1-\ga}\ln\frac{\tau}{(1-\tau)\ga},
&\hbox{on }\pl\Bs\\
\int_{\Bs}\{e^z+\frac{e^{\ga z}}{\ga}\}=\la
\eca
\eeq
satisfying
\[
z(0)=\max_{\rr^2}z=v(0)+\frac{1}{1-\ga}\ln\left[\frac{\tau}{(1-\tau)\ga}\right].
\]
\end{prop}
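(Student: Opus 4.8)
The plan is to prove this by a direct scaling‑and‑translation reduction, exactly parallel to (and slightly simpler than) the one behind Proposition~\ref{prop:reddet}. Abbreviate $D:=\int_{\BB}\{\tau e^{v}+(1-\tau)e^{\ga v}\}>0$ and $c:=\frac{1}{1-\ga}\ln\frac{\tau}{(1-\tau)\ga}$, so that the function in \eqref{def:zstoch} is $z(y)=v(y/\si)+c$. Since the right‑hand side of \eqref{eq:stochB} equals $\frac{\la}{D}(\tau e^{v}+(1-\tau)\ga e^{\ga v})$, the whole point is to choose $c$ and $\si$ so that this nonlinearity is transformed into $e^{z}+e^{\ga z}$; the boundary value, the mass, and the location of the maximum are then read off.

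First I would compute $-\Delta z$. By the chain rule, for $y\in\Bs$, $-\Delta z(y)=\si^{-2}(-\Delta v)(y/\si)=\frac{\la}{\si^{2}D}(\tau e^{-c}e^{z}+(1-\tau)\ga e^{-\ga c}e^{\ga z})$, where I used $v(y/\si)=z(y)-c$. Matching this with $e^{z}+e^{\ga z}$ requires the two identities $\frac{\la\tau e^{-c}}{\si^{2}D}=1$ and $\frac{\la(1-\tau)\ga e^{-\ga c}}{\si^{2}D}=1$. These are compatible precisely when $\tau e^{-c}=(1-\tau)\ga e^{-\ga c}$, i.e.\ $e^{(1-\ga)c}=\frac{\tau}{(1-\tau)\ga}$, which is the value of $c$ above; with $c$ fixed, either identity gives $\si^{2}=\la\tau e^{-c}/D$, and raising to the power $1-\ga$ and using $e^{-(1-\ga)c}=\frac{(1-\tau)\ga}{\tau}$ reproduces the stated formula $\si^{2(1-\ga)}=\frac{\la^{1-\ga}(1-\tau)\ga}{\tau^{\ga}}D^{-(1-\ga)}$. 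I regard this as the main (though still elementary) point: a single dilation $\si$ can simultaneously normalize the $e^{z}$ term and the $e^{\ga z}$ term only because the translation constant $c$ has been tuned to balance their two coefficients, and this is also what forces the explicit values of both $c$ and $\si$.

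Next I would check the boundary value and the mass. On $\pl\Bs$ one has $|y/\si|=1$, so $v(y/\si)=0$ and $z\equiv c=\frac{1}{1-\ga}\ln\frac{\tau}{(1-\tau)\ga}$, which is the boundary condition in \eqref{eq:zstoch}. For the mass, the substitution $y=\si x$ with $x\in\BB$ gives $\int_{\Bs}e^{z}=\si^{2}e^{c}\int_{\BB}e^{v}$ and $\int_{\Bs}\frac{e^{\ga z}}{\ga}=\frac{\si^{2}e^{\ga c}}{\ga}\int_{\BB}e^{\ga v}$; from the two normalizations above, $\si^{2}e^{c}=\la\tau/D$ and $\si^{2}e^{\ga c}/\ga=\la(1-\tau)/D$, so the sum equals $\frac{\la}{D}(\tau\int_{\BB}e^{v}+(1-\tau)\int_{\BB}e^{\ga v})=\frac{\la}{D}\cdot D=\la$, which is exactly the integral constraint in \eqref{eq:zstoch}.

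Finally I would settle the maximum assertion. By \cite{GNN}, $v$ is radially symmetric, and writing \eqref{eq:stochB} in radial coordinates yields $rv'(r)=-\int_{0}^{r}s\,\frac{\la}{D}(\tau e^{v}+(1-\tau)\ga e^{\ga v})\,ds<0$ for $r\in(0,1)$, so $v$ is strictly radially decreasing and $v(0)=\max v$. Since $z$ equals $v$ composed with a dilation plus the constant $c$, it inherits strict radial monotonicity, whence $z(0)=\max z=v(0)+c$, completing the list of properties in the statement. I do not anticipate a genuine obstacle here: the argument is purely a change of variables, and the only step needing a little care is the compatibility of the two PDE normalizations discussed in the second paragraph.
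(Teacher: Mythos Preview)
Your proposal is correct and follows essentially the same approach as the paper: a translation of $v$ by the constant $c=\frac{1}{1-\ga}\ln\frac{\tau}{(1-\tau)\ga}$ combined with a dilation by $\si$, chosen so that the two exponential terms acquire unit coefficients. The only cosmetic difference is that the paper first records a general reduction (Lemma~\ref{lem:redab}) for the local problem $-\Delta v=ae^{v}+be^{\ga v}$ in $\BB$ and then specializes to $a=\la\tau/D$, $b=\la(1-\tau)\ga/D$, whereas you carry out the same computation directly; your explicit check of the compatibility condition $\tau e^{-c}=(1-\tau)\ga e^{-\ga c}$ and of the integral constraint is exactly the content of the paper's argument.
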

Once we are reduced to problem~\eqref{eq:PT},
we may use the following known results.
\begin{lemma}[\cite{Yang1994}]
\label{lem:Yang}
For any $a,b>0$ and for any $\al\in\rr$, the initial value problem:
\beq
\label{eq:Yang}
\bca
-\left(\eta''+\frac{1}{r}\eta'\right)=a e^{\eta}+b e^{\ga\eta},&r>0\\
\eta(0)=\al,\ \eta'(0)=0
\eca
\eeq
admits a unique globally defined solution for any $\al\in\rr$.
Moreover, there exists $\beta>2/\ga$ such that
\[
\lim_{r\to+\infty}\frac{\eta(r)}{\ln r}=-\beta.
\]
\end{lemma}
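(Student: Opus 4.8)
The plan is to treat \eqref{eq:Yang} as a singular ODE initial value problem and to extract the logarithmic decay rate as the limit of the radial mass $\si(r):=-r\eta'(r)$. Throughout write $f(\eta):=a e^\eta+b e^{\ga\eta}$ and $F(\eta):=a e^\eta+\frac{b}{\ga}e^{\ga\eta}$, so that $F'=f$.

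First I would recast \eqref{eq:Yang} as the integral equation $\eta(r)=\al-\int_0^r s^{-1}\!\int_0^s t\,f(\eta(t))\,dt\,ds$, which encodes both the equation and the data $\eta(0)=\al$, $\eta'(0)=0$ and whose kernel is integrable at the singular point $r=0$. Since $f$ is locally Lipschitz, a contraction-mapping argument on $C([0,\de])$ for small $\de$ yields a unique local $C^2$ solution. Integrating the equation once gives $\si(r)=\int_0^r s\,f(\eta(s))\,ds\ge0$, hence $\eta'=-\si/r\le0$, so $\eta$ is nonincreasing and $\eta\le\al$; therefore $f(\eta)\le f(\al)$ and $|\eta'(r)|\le\tfrac12 f(\al)\,r$. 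These a priori bounds preclude finite-time blow-up, so the local solution extends to all of $[0,\infty)$.

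Next I would identify the rate. Since $\si$ is nondecreasing and positive, $\si(r)\uparrow\beta\in(0,+\infty]$, and from $\eta(r)=\al-\int_0^r \si(s)/s\,ds$ l'Hôpital gives $\eta(r)/\ln r\to-\beta$. The pivotal a priori fact is that $\mu:=\liminf_{r\to\infty}(-\eta(r)/\ln r)\ge\beta$: for $r\ge R_0$ one has $\eta(r)\le\eta(R_0)-\si(R_0)\ln(r/R_0)$, and letting $R_0\to\infty$ yields $\mu\ge\sup_{R_0}\si(R_0)=\beta$. I then rule out $\beta=+\infty$: if $\beta=\infty$ then $\mu=\infty$, so $\eta$ decays faster than any multiple of $\ln r$, whence both the $e^\eta$ and the $e^{\ga\eta}$ tails are super-polynomially small and $\int_0^\infty s\,f(\eta)\,ds<\infty$, contradicting $\si(r)\uparrow\infty$. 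Thus $\beta<\infty$, $\eta\to-\infty$, and $\eta(r)/\ln r\to-\beta$.

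The delicate point, which I expect to be the main obstacle, is the \emph{strict} bound $\beta>2/\ga$: the value $\ga\beta=2$ places the $e^{\ga\eta}$-tail exactly at the edge of integrability, so a crude rate estimate delivers only $\ga\beta\ge2$. The clean resolution is to exploit the monotonicity of the subleading correction. Setting $\rho(r):=\eta(r)+\beta\ln r$, one computes $\rho'(r)=(\beta-\si(r))/r\ge0$, so $\rho$ is nondecreasing and bounded below by $\rho(1)=:C_0>-\infty$; hence $\eta(r)\ge C_0-\beta\ln r$ and $e^{\ga\eta(r)}\ge e^{\ga C_0}r^{-\ga\beta}$ for $r\ge1$. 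Since $\int_0^\infty s\,b\,e^{\ga\eta}\,ds\le\int_0^\infty s\,f(\eta)\,ds=\beta<\infty$, this lower bound forces $\int_1^\infty s^{1-\ga\beta}\,ds<\infty$, i.e.\ $1-\ga\beta<-1$, which is precisely $\beta>2/\ga$. The crux is therefore the observation that the correction $\rho$ is monotone and so cannot drift to $-\infty$; this excludes the borderline scenario (a slowly vanishing logarithmic correction) that would otherwise be compatible with $\ga\beta=2$, and it yields the strict inequality.
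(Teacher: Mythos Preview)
Your argument is correct and considerably more self-contained than the paper's own treatment. The paper handles local existence the same way you do---by rewriting the singular ODE as an integral equation and invoking standard contraction/ODE theory---but then simply defers the global existence and the asymptotic statement $\eta(r)/\ln r\to-\beta$ with $\beta>2/\ga$ to Lemma~5.1 and Lemma~5.2 of \cite{Yang1994} (and \cite{BerestyckiLionsPeletier1981}); it does not reproduce those arguments. What you add is a clean direct proof of the asymptotics: your identification of $\beta$ as the limit of the mass $\si(r)=\int_0^r s\,f(\eta)\,ds$ together with l'H\^opital is standard, but the way you secure the \emph{strict} inequality $\ga\beta>2$ is the genuinely nice step. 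Observing that $\rho(r)=\eta(r)+\beta\ln r$ has $\rho'=(\beta-\si)/r\ge0$ pins down a pointwise lower bound $e^{\ga\eta(r)}\ge e^{\ga C_0}r^{-\ga\beta}$, which converts the finiteness of the total mass into the integrability condition $\int_1^\infty s^{1-\ga\beta}\,ds<\infty$ and hence $\ga\beta>2$. This monotonicity trick is exactly what excludes the borderline scenario $\ga\beta=2$ that a crude comparison would leave open, and it makes your write-up independent of the cited references.
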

\begin{proof}
In order to obtain the local existence of solutions to \eqref{eq:Yang},
we rewrite the problem in the form $\eta'=F(r,\eta)$, $\eta(0)=\al$,
where
\[
F(r,\eta)=
\bca
-\frac{1}{r}\int_0^rsf(\eta(s))\,ds,&\hbox{if }r\ge0\\
0&\hbox{if }r<0
\eca
\]
and $f(t)=ae^t+be^{\ga t}$. It is readily checked that $F$ is continuous and locally Lipschitz
continuous
with respect to $\eta$. Hence, standard ODE theory yields the asserted local existence and uniqueness.
Now, the global existence and the asymptotic behavior follow by 
Lemma~5.1 and Lemma~5.2 in \cite{Yang1994}, see also \cite{BerestyckiLionsPeletier1981}.
\end{proof}
The sharp range of admissible values of the constant $\beta$ in Lemma~\ref{lem:Yang}
was determined in \cite{PoliakovskyTarantello}.
\begin{lemma}[\cite{PoliakovskyTarantello, ChenGuoSpirn}]
\label{lem:PT}
For every $\al\in\rr$ there exists a unique solution to \eqref{eq:PT}.
Such a solution is radially symmmetric with respect to $0$ 
and strictly decreasing with respect to $r=|x|$.
Moreover, $m(\al)$ is strictly decreasing with respect to $\al\in\rr$ and it satisfies
\[
\begin{aligned}
&\lim_{\al\to+\infty}m(\al)=8\pi\max\left\{1,\frac{1-\ga}{\ga}\right\}=
\begin{cases}
8\pi&\hbox{if\ }1/2\leq\ga<1,\\
8\pi\frac{1-\ga}{\ga}
&\hbox{if\ }0<\ga<1/2,
\end{cases}\\
&\lim_{\al\to-\infty}m(\al)=\frac{8\pi}{\ga}.
\end{aligned}
\]
\end{lemma}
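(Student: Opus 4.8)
The plan is to reduce \eqref{eq:PT} to the radial ODE of Lemma~\ref{lem:Yang} and then to read off all the quantitative information from two scalar identities. \textbf{For existence} I would invoke Lemma~\ref{lem:Yang} with $a=b=1$ and data $\eta(0)=\al$, $\eta'(0)=0$, obtaining a global radial candidate $z(x):=\eta(|x|)$ solving $-\Delta z=e^z+e^{\ga z}$ in $\rr^2$. Rewriting the equation as $(r\eta')'=-r(e^{\eta}+e^{\ga\eta})$ and integrating from $0$ yields $r\eta'(r)<0$ for $r>0$, so $\eta$ is strictly decreasing and $z(0)=\max_{\rr^2}z=\al$. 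The asymptotics $\eta(r)/\ln r\to-\beta$ with $\beta>2/\ga$ furnished by Lemma~\ref{lem:Yang} give $e^{\eta}\sim r^{-\beta}$ and $e^{\ga\eta}\sim r^{-\ga\beta}$ with $\beta>2$ and $\ga\beta>2$, so both are integrable near infinity and $m(\al):=\int_{\rr^2}(e^z+e^{\ga z})<\infty$; thus $z$ solves \eqref{eq:PT}. \textbf{For uniqueness} I would note that any solution of \eqref{eq:PT} has finite mass and attains its maximum at the prescribed point $0$, so the moving-plane method for Liouville-type equations (cf.\ \cite{GNN, PoliakovskyTarantello, ChenGuoSpirn}) forces radial symmetry and strict radial monotonicity about $0$, reducing it to the ODE, whose solution is unique.

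\textbf{Two scalar identities.} I would set $M_1:=\int_{\rr^2}e^z$ and $M_2:=\int_{\rr^2}e^{\ga z}$, so that $m(\al)=M_1+M_2$, and extract two relations. Integrating the equation over $\rr^2$ and using $r\eta'(r)\to-\beta$ gives the mass--flux identity $M_1+M_2=2\pi\beta$, while testing against $x\cdot\nabla z$ on $B_R$ and letting $R\to\infty$ (all boundary terms decaying because $\beta>2/\ga$) gives the Pohozaev identity $2M_1+\tfrac{2}{\ga}M_2=\pi\beta^2$. Together these determine $M_1$ and $M_2$ as explicit functions of $\beta=\beta(\al)$.

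\textbf{The two limits.} For the limits I would combine a blow-up analysis with these identities. As $\al\to-\infty$ one has $z\le\al\to-\infty$ and $e^{\ga z}\gg e^{z}$ pointwise, so the suitably rescaled profile converges to a Liouville solution of $-\Delta Z=e^{\ga Z}$ of total mass $8\pi/\ga$; hence $M_2\to8\pi/\ga$ and $M_1\to0$, giving $m(\al)\to8\pi/\ga$. As $\al\to+\infty$ the inner rescaling $w(y):=z(e^{-\al/2}y)-\al$ solves $-\Delta w=e^{w}+e^{-(1-\ga)\al}e^{\ga w}\to e^{w}$ and converges to the standard bubble $U$ with $\int e^{U}=8\pi$, so $M_1\to8\pi$; the identities then reduce to the quadratic $\beta^{2}-\tfrac{4}{\ga}\beta+\tfrac{16(1-\ga)}{\ga}=0$, whose roots are $4$ and $4(1-\ga)/\ga$. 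The correct root is chosen by whether the secondary mass survives: since $e^{\ga U}\sim|y|^{-4\ga}$ is integrable precisely when $\ga>1/2$, one gets $M_2\to0$ and $\beta=4$ for $\ga\ge1/2$, while for $\ga<1/2$ the term $e^{\ga z}$ retains mass on a coarser scale and $\beta=4(1-\ga)/\ga$; in all cases $m(\al)=2\pi\beta\to8\pi\max\{1,(1-\ga)/\ga\}$.

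\textbf{Monotonicity: the main obstacle.} Differentiating the two identities in $\al$ and eliminating $M_1'$ gives $M_2'=\tfrac{\pi\ga(\beta-2)}{1-\ga}\,\beta'$; since $\beta>2$ this shows $\operatorname{sign}m'=\operatorname{sign}\beta'=\operatorname{sign}M_2'$, so it suffices to prove that $M_2(\al)=\int_{\rr^2}e^{\ga z_\al}$ is strictly decreasing. This is where I expect the real work to lie. Writing $\phi:=\partial_\al\eta$, which solves the linearized equation $-(\phi''+\tfrac1r\phi')=(e^{\eta}+\ga e^{\ga\eta})\phi$ with $\phi(0)=1$, $\phi'(0)=0$, one has $M_2'=2\pi\ga\int_0^\infty e^{\ga\eta}\phi\,r\,dr$. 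Because $\phi(0)=1>0$ whereas the limits above force the mass ultimately to decrease, $\phi$ must change sign, so no maximum principle delivers the conclusion directly; one has to control the nodal structure of $\phi$ and show the negative outer contribution dominates. I would handle this by a Sturm comparison of the radial mode $\phi$ against the translation mode $\eta'$, which is strictly negative and nodeless on $(0,\infty)$, following the detailed ODE analysis of \cite{PoliakovskyTarantello}.
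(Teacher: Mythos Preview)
The paper does not supply a proof of this lemma at all: it is stated as a citation from \cite{PoliakovskyTarantello, ChenGuoSpirn}, with no argument given in the text. So there is no ``paper's own proof'' to compare your proposal against.

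That said, your sketch is a faithful outline of how the result is established in those references. The existence and radial-symmetry parts are routine given Lemma~\ref{lem:Yang} and moving planes. Your two scalar identities are exactly the mass--flux relation $M_1+M_2=2\pi\beta$ together with the Pohozaev relation, and eliminating $\beta$ between them reproduces the energy identity of Lemma~\ref{lem:CGS}. Your computation of the limiting values of $\beta$ (hence of $m(\al)=2\pi\beta$) is correct, and the dichotomy at $\ga=1/2$ arises precisely for the integrability reason you state. You are also right that the strict monotonicity of $m(\al)$ is the genuinely nontrivial step: the linearized field $\phi=\partial_\al\eta$ changes sign, so no direct maximum-principle argument works, and the proof in \cite{PoliakovskyTarantello} indeed proceeds via a careful ODE/Sturm-type analysis of the radial linearization, along the lines you indicate. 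One small caveat: your asymptotics $e^{\eta}\sim r^{-\beta}$ should be read only as a logarithmic statement ($\eta(r)=-\beta\ln r+O(1)$ is what Lemma~\ref{lem:Yang} actually gives), but this is enough for the integrability you need.
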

We set
\beq
\label{def:mm}
m_1(\al):=\int_{\rr^2} e^z
\qquad  m_\ga(\al):=\int_{\rr^2}e^{\ga z},
\eeq
where $z$ is a solution to \eqref{eq:PT}.
We also need an \lq\lq energy identity" from \cite{ChenGuoSpirn}.
\begin{lemma}[\cite{ChenGuoSpirn}]
\label{lem:CGS}
The following relation holds true:
\[
(m_1(\al)+m_\ga(\al))^2=8\pi\left(m_1(\al)+\frac{m_\ga(\al)}{\ga}\right)
\]
for any $\al\in\rr$.
\end{lemma}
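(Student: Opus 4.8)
The plan is to obtain the asserted identity as the limiting form of a Pohozaev identity for problem~\eqref{eq:PT}, derived by multiplying the equation by $x\cdot\nabla z$ and integrating over a ball $B_R$. Set $f(t)=e^t+e^{\ga t}$ and let $F(t)=e^t+\tfrac{1}{\ga}e^{\ga t}$ be the primitive normalized so that $F(t)\to0$ as $t\to-\infty$; thus $F'=f$. For the right-hand side one writes $f(z)(x\cdot\nabla z)=x\cdot\nabla\big(F(z)\big)$ and applies the divergence theorem together with $\dive x=2$ in $\rr^2$, obtaining
\[
\int_{B_R}f(z)\,(x\cdot\nabla z)=R\int_{\pl B_R}F(z)\,dS-2\int_{B_R}F(z).
\]

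For the left-hand side the standard Rellich--Pohozaev computation gives, in dimension two (where the bulk term $\tfrac{n-2}{2}\int_{B_R}|\nabla z|^2$ drops out because $n=2$),
\[
\int_{B_R}(-\Delta z)\,(x\cdot\nabla z)=-\int_{\pl B_R}\Big[(\pl_\nu z)(x\cdot\nabla z)-\tfrac12|\nabla z|^2(x\cdot\nu)\Big]\,dS.
\]
Since $z$ is radial by Lemma~\ref{lem:PT}, on $\pl B_R$ we have $\pl_\nu z=z'(R)$, $x\cdot\nabla z=Rz'(R)$, $|\nabla z|^2=z'(R)^2$ and $x\cdot\nu=R$, so all boundary integrals reduce to evaluations at radius $R$ times $2\pi R$. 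Equating the two sides yields the scalar relation
\[
-\pi R^2 z'(R)^2=2\pi R^2 F(z(R))-2\int_{B_R}F(z).
\]

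The next step is the passage to the limit $R\to+\infty$. Integrating the ODE once gives the exact first integral $rz'(r)=-\int_0^r s\,f(z(s))\,ds$, whence $rz'(r)\to-\tfrac{1}{2\pi}m(\al)$ and therefore $R^2z'(R)^2\to m(\al)^2/4\pi^2$. By Lemma~\ref{lem:Yang} applied with $a=b=1$ there is $\beta>2/\ga$ with $z(r)/\ln r\to-\beta$ (consistently, $\beta=m(\al)/2\pi$); since $\ga\beta>2$ and also $\beta>2/\ga>2$, both $R^2e^{z(R)}$ and $R^2e^{\ga z(R)}$ tend to $0$, so the boundary term $R^2F(z(R))\to0$. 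Moreover $F(z)\in L^1(\rr^2)$ with $\int_{\rr^2}F(z)=m_1(\al)+\tfrac{1}{\ga}m_\ga(\al)$ by \eqref{def:mm}. Letting $R\to+\infty$ gives $-\tfrac{m(\al)^2}{4\pi}=-2\big(m_1(\al)+\tfrac{1}{\ga}m_\ga(\al)\big)$, that is $m(\al)^2=8\pi\big(m_1(\al)+m_\ga(\al)/\ga\big)$; since $m(\al)=m_1(\al)+m_\ga(\al)$ this is precisely the claimed identity.

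The main obstacle is the rigorous justification of the boundary contributions as $R\to+\infty$: one must confirm both $R^2F(z(R))\to0$ and the sharp limit $R^2z'(R)^2\to m(\al)^2/4\pi^2$. Both rest entirely on the decay rate $z(r)\sim-\beta\ln r$ with $\beta>2/\ga$ furnished by Lemma~\ref{lem:Yang}, combined with the first integral $rz'(r)=-\tfrac{1}{2\pi}\int_{B_r}f(z)$; once these asymptotics are secured, the rest of the argument is purely algebraic.
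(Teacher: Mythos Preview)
Your argument is correct. The paper itself does not prove this lemma; it is stated with a citation to \cite{ChenGuoSpirn} and used as a black box. Your derivation via the Pohozaev identity---multiplying by $x\cdot\nabla z$, integrating over $B_R$, and passing to the limit using the decay $z(r)\sim-\beta\ln r$ with $\beta>2/\ga$ from Lemma~\ref{lem:Yang} together with the first integral $rz'(r)=-\tfrac{1}{2\pi}\int_{B_r}(e^z+e^{\ga z})$---is the standard route to such energy identities, and it is exactly the kind of computation underlying the result in \cite{ChenGuoSpirn}. The only point worth making explicit is that $z(r)/\ln r\to-\beta$ alone gives $z(r)\le(-\beta+\eps)\ln r$ for large $r$, not the sharper $z(r)=-\beta\ln r+O(1)$; but since $\beta>2/\ga>2$, choosing $\eps>0$ small enough still forces $R^2e^{z(R)}\to0$ and $R^2e^{\ga z(R)}\to0$, so your boundary-term argument goes through as written.
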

\begin{rmk}
From Lemma~\ref{lem:PT} and Lemma~\ref{lem:CGS} it follows in particular that
\beq
\label{eq:mle8pi}
m_1(\al)<8\pi\qquad \hbox{for all } \al\in\rr.
\eeq
\end{rmk}
In order to establish Proposition~\ref{prop:reddet} and Proposition~\ref{prop:redstoch},
we begin by observing the following general fact.
\begin{lemma}
\label{lem:redab}
Let $v$ be a solution to the problem:
\beq
\label{eq:ab}
\bca
\begin{aligned}
-\Delta v=&a\,e^v+b\,e^{\ga v}
&&\hbox{in\ }\BB\\
v=&0
&&\hbox{on\ }\pl\BB,
\end{aligned}
\eca
\eeq
where $a,b>0$.
Let
\begin{equation}
\label{def:z}
z(y):=v(\frac{y}{\sigma})+\ln(\frac{a}{\si^2}),
\qquad y\in\Bs,
\end{equation}
with $\si>0$ defined by
\begin{equation}
\label{def:mu}
\si^{2(1-\ga)}=\frac{b}{a^\ga}.
\end{equation}
Then, $z$ is a solution to the problem:
\beq
\label{eq:z}
\bca
-\Delta z=e^z+e^{\ga z}&\hbox{in }\Bs\\
z=\frac{1}{1-\ga}\ln\frac{a}{b}&\hbox{on }\pl\Bs\\
\int_{\Bs}e^{z(y)}\,dy=\int_{\BB}ae^{v(x)}\,dx\\
\int_{\Bs}e^{\ga z(y)}\,dy=\int_{\BB}be^{\ga v(x)}\,dx.
\eca
\eeq
\end{lemma}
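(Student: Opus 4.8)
The plan is to verify the lemma by a direct change of variables: \eqref{eq:z} is obtained from \eqref{eq:ab} purely through the dilation plus vertical translation \eqref{def:z}, so no compactness, uniqueness, or qualitative input is needed — only bookkeeping. First I would record the effect of the dilation on the Laplacian. Writing $w(y):=v(y/\si)$ for $y\in\Bs$, the chain rule gives $\Delta w(y)=\si^{-2}(\Delta v)(y/\si)$, and since $v$ solves \eqref{eq:ab} in $\BB$ and $|y|<\si$ forces $y/\si\in\BB$, this equals $-\si^{-2}\bigl(a\,e^{w(y)}+b\,e^{\ga w(y)}\bigr)$. Because $z=w+c$ with $c:=\ln(a/\si^{2})$ constant, we have $\Delta z=\Delta w$.

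Next I would match the two exponential terms. We want $-\Delta z=e^{z}+e^{\ga z}$, i.e. $\si^{-2}\bigl(a\,e^{w}+b\,e^{\ga w}\bigr)=e^{c}e^{w}+e^{\ga c}e^{\ga w}$, so it suffices to check the two scalar identities $e^{c}=a/\si^{2}$ and $e^{\ga c}=b/\si^{2}$. The first is just the definition of $c$. For the second, $e^{\ga c}=(a/\si^{2})^{\ga}=a^{\ga}\si^{-2\ga}$, and this equals $b/\si^{2}$ precisely when $\si^{2(1-\ga)}=b/a^{\ga}$, which is the choice \eqref{def:mu}. This is the only point requiring attention — one must normalize \emph{both} coefficients to $1$ with a single value of $\si$, and that requirement is exactly what pins down $\si$; everything else is routine, so I do not expect any genuine obstacle here.

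Finally, the boundary value and the two mass identities follow from the same substitution. On $\partial\Bs$ one has $y/\si\in\partial\BB$, hence $v(y/\si)=0$ and $z\equiv c=\ln(a/\si^{2})$; inserting $\si^{2}=(b/a^{\ga})^{1/(1-\ga)}$ gives $a/\si^{2}=(a/b)^{1/(1-\ga)}$, so $z=\tfrac{1}{1-\ga}\ln(a/b)$ on $\partial\Bs$. For the integrals, the change of variables $x=y/\si$ (so $dy=\si^{2}\,dx$ and $\Bs\mapsto\BB$) yields $\int_{\Bs}e^{z}\,dy=e^{c}\si^{2}\int_{\BB}e^{v}\,dx=a\int_{\BB}e^{v}\,dx$, and likewise $\int_{\Bs}e^{\ga z}\,dy=e^{\ga c}\si^{2}\int_{\BB}e^{\ga v}\,dx=b\int_{\BB}e^{\ga v}\,dx$, using the identities $e^{c}=a/\si^{2}$ and $e^{\ga c}=b/\si^{2}$ established above. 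This gives all four statements in \eqref{eq:z}.
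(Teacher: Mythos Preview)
Your proof is correct and is essentially the same direct change-of-variables argument as the paper's: the paper first translates by $\ln a$ and then rescales by $\si$ (adding $2\ln(1/\si)$), whereas you perform the single combined substitution $z(y)=v(y/\si)+\ln(a/\si^{2})$ already given in the statement, but the verification of the equation, the boundary value via $a/\si^{2}=(a/b)^{1/(1-\ga)}$, and the two mass identities by $dy=\si^{2}\,dx$ are identical in both.
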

\begin{proof}
The function $w:=v+\ln a$ satisfies
\beq
\bca
-\Delta w=e^w+\frac{b}{a^\ga}e^{\ga w}&\hbox{in }\BB\\
w=\ln a&\hbox{on }\pl\BB\\
\int_{\BB}e^w
=\int_{\BB}ae^v\\
\int_{\BB}\frac{b}{a^\ga}e^{\ga w}
=\int_{\BB}be^{\ga v}.
\eca
\eeq 
For $\si>0$ we rescale
\[
z(y):=w(\frac{y}{\si})+2\ln\frac{1}{\si}=v(\frac{y}{\si})+\ln(\frac{a}{\si^2}),
\qquad y\in\Bs.
\]
Then $z$ satisfies
\[
\bca
-\Delta z=e^z+\si^{-2(1-\ga)}\frac{b}{a^\ga}e^{\ga z}&\hbox{in }\Bs\\
z=\ln\frac{a}{\si^2}&\hbox{on }\pl\Bs\\
\int_{\Bs}e^{z(y)}\,dy=\int_{\BB}e^{w(x)}\,dx\\
\int_{\Bs}\si^{-2(1-\ga)}e^{\ga z(y)}\,dy=\int_{\BB}e^{\ga w(x)}\,dx.
\eca
\]
We observe that $\eqref{def:mu}$ implies that
\beq
\label{eq:abs}
\frac{a}{\si^2}=\left(\frac{a}{b}\right)^{\frac{1}{1-\ga}}.
\eeq
We deduce that if $\si$ is defined by $\eqref{def:mu}$,
then $z$ satisfies \eqref{eq:z}, as asserted.
\end{proof}
\begin{proof}
[Proof of Proposition~\ref{prop:reddet}]
We apply Lemma~\ref{lem:redab} with
\[
a=\frac{\la\tau}{\int_{\BB}e^v},
\qquad\qquad
b=\frac{\la(1-\tau)\ga}{\int_{\BB}e^{\ga v}}.
\]
Choosing $\si$ according to \eqref{def:mu}:
\[
\si^{2(1-\ga)}=\frac{b}{a^\ga}=\frac{\la^{1-\ga}(1-\tau)\ga}{\tau^\ga}
\frac{\left(\int_{\BB}e^v\right)^\ga}{\int_{\BB}e^{\ga v}},
\]
we obtain, recalling \eqref{eq:abs}, that
\[
\frac{a}{\si^2}=\left[\frac{\tau}{(1-\tau)\ga}
\frac{\int_{\BB}e^{\ga v}}{\int_{\BB}e^v}\right]^{\frac{1}{1-\ga}}.
\]
Thus, the function~$z$ defined by \eqref{def:zdet} satisfies the problem
\[
\bca
-\Delta z=e^z+e^{\ga z}&\hbox{in }\Bs\\
z=\frac{1}{1-\ga}\ln[\frac{\tau}{(1-\tau)\ga}
\frac{\int_{\BB}e^{\ga v}}{\int_{\BB}e^v}]
&\hbox{on }\pl\Bs\\
\int_{\Bs}e^z=\int_{\BB}ae^v=\la\tau\\
\int_{\Bs}e^{\ga z}=\int_{\BB}be^{\ga v}=\la(1-\tau)\ga
\eca
\]
and the asserted reduction is established.
\end{proof}
Similarly, we establish the reduction of problem~\eqref{eq:stochB}.
\begin{proof}
[Proof of Proposition~\ref{prop:redstoch}]
We apply Lemma~\ref{lem:redab} with
\[
a=\frac{\la\tau}{\int_{\BB}\{\tau e^v+(1-\tau)e^{\ga v}\}},
\qquad\qquad
b=\frac{\la(1-\tau)\ga}{\int_{\BB}\{\tau e^v+(1-\tau)e^{\ga v}\}}.
\]
Choosing $\si>0$ according to \eqref{def:mu}:
\[
\si^{2(1-\ga)}=\frac{b}{a^\ga}
=\frac{\la^{1-\ga}(1-\tau)\tau^{-\ga}\ga}{(\int_{\BB}\{\tau e^v+(1-\tau)e^{\ga v}\})^{1-\ga}},
\]
we find, recalling \eqref{eq:abs}, that
\[
\frac{a}{\si^2}=\left[\frac{\tau}{(1-\tau)\ga}\right]^{\frac{1}{1-\ga}}.
\]
Moreover,
\[
\bal
\int_{\Bs}e^z=&\int_{\BB}ae^v
=\frac{\la\tau\int_{\BB}e^v}{\int_{\BB}\{\tau e^v+(1-\tau)e^{\ga v}\}}\\
\int_{\Bs}e^{\ga z}
=&\int_{\BB}be^{\ga v}=\frac{\la(1-\tau)\ga\int_{\BB}e^{\ga v}}{\int_{\BB}\{\tau e^v+(1-\tau)e^{\ga v}\}}.
\eal
\]
Then, the function~$z$ 
defined by \eqref{def:zstoch} is a solution to the problem:
\[
\bca
-\Delta z=e^z+e^{\ga z},
&\hbox{in }\Bs\\
z=\frac{1}{1-\ga}\ln\frac{\tau}{(1-\tau)\ga},
&\hbox{on }\pl\Bs\\
\int_{\Bs}e^z+\frac{e^{\ga z}}{\ga}=\la,
\eca
\]
as asserted.
\end{proof}
%%%%%%%%%%%%%%%%%%%%%%%%%%%%%%%%%%%%%%%%%%%%%%%%%%%%%%%%%%%%%%%%%%%%%%%%%%%%%%%
%%%%%%%%%%%%%%%%%%%%%%%%%%%%%%%%%%%%%%%%%%%%%%%%%%%%%%%%%%%%%%%%%%%%%%%%%%%%%%%
\section{Proof of Theorem~\ref{thm:det}}
\label{sec:det}
%%%%%%%%%%%%%%%%%%%%%%%%%%%%%%%%%%%%%%%%%%%%%%%%%%%%%%%%%%%%%%%%%%%%%%%%%%%%%%%
%%%%%%%%%%%%%%%%%%%%%%%%%%%%%%%%%%%%%%%%%%%%%%%%%%%%%%%%%%%%%%%%%%%%%%%%%%%%%%%
In order to prove Theorem~\ref{thm:det} we use the following Pohozaev identity.
\begin{lemma}[Pohozaev identity]
\label{lem:Pohozaev}
The following identity holds true:
\[
-\frac{1}{2}\int_{\partial\BB}|\nabla v|^2\,d\sigma
=-2\la+2\pi\la\left(\frac{\tau}{\int_{\BB} e^v}+\frac{1-\tau}{\int_{\BB} e^{\ga v}}\right)
\]
\end{lemma}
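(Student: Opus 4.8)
The plan is to derive the identity by the classical Pohozaev argument: multiply the equation in \eqref{eq:detB} by the radial vector field $x\cdot\nabla v$ and integrate over $\BB$. Since the right-hand side of \eqref{eq:detB} is bounded, elliptic regularity guarantees that $v$ is smooth up to $\pl\BB$, so all the integrations by parts below are legitimate.

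First I would handle the left-hand side. For any smooth $v$ on $\BB\subset\rr^n$ one has the standard divergence identity
\[
\int_{\BB}(-\Delta v)(x\cdot\nabla v)\,dx=\Big(1-\tfrac{n}{2}\Big)\int_{\BB}|\nabla v|^2\,dx+\int_{\pl\BB}\Big[\tfrac12(x\cdot\nu)|\nabla v|^2-(x\cdot\nabla v)\,\pl_\nu v\Big]\,d\sigma,
\]
where $\nu$ is the outer unit normal; it follows from writing $(\Delta v)(x\cdot\nabla v)=\dive\big((x\cdot\nabla v)\nabla v\big)-\nabla v\cdot\nabla(x\cdot\nabla v)$ and $\nabla v\cdot\nabla(x\cdot\nabla v)=|\nabla v|^2+\tfrac12 x\cdot\nabla(|\nabla v|^2)$. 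In dimension $n=2$ the bulk term drops out, and on $\pl\BB$ we have $\nu=x$, $x\cdot\nu=1$, while $v\equiv0$ on $\pl\BB$ forces $\nabla v=(\pl_\nu v)\nu$, so $x\cdot\nabla v=\pl_\nu v$ and $|\nabla v|^2=(\pl_\nu v)^2$. The boundary integrand therefore collapses to $\tfrac12|\nabla v|^2-|\nabla v|^2=-\tfrac12|\nabla v|^2$, and the left-hand side equals $-\tfrac12\int_{\pl\BB}|\nabla v|^2\,d\sigma$.

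For the right-hand side I would write the nonlinearity as $f(v)$ with $f(t)=\tfrac{\la\tau}{A}e^{t}+\tfrac{\la(1-\tau)\ga}{B}e^{\ga t}$, where $A:=\int_{\BB}e^v$ and $B:=\int_{\BB}e^{\ga v}$ are constants, and take the primitive $F(t)=\tfrac{\la\tau}{A}e^{t}+\tfrac{\la(1-\tau)}{B}e^{\ga t}$ (up to an additive constant, which will cancel). Then $f(v)(x\cdot\nabla v)=x\cdot\nabla\big(F(v)\big)=\dive\big(xF(v)\big)-nF(v)$, whence
\[
\int_{\BB}f(v)(x\cdot\nabla v)\,dx=\int_{\pl\BB}F(v)(x\cdot\nu)\,d\sigma-n\int_{\BB}F(v)\,dx.
\]
Using $v\equiv0$ and $x\cdot\nu\equiv1$ on $\pl\BB$, $|\pl\BB|=2\pi$, $|\BB|=\pi$, $n=2$, and the normalizations $\int_{\BB}e^v=A$, $\int_{\BB}e^{\ga v}=B$, the boundary term becomes $2\pi F(0)$ and the bulk term becomes $2\big(\la+\pi\,c\big)$ where $c$ is the free additive constant in $F$; the two occurrences of $c$ cancel, leaving $-2\la+2\pi\la\big(\tfrac{\tau}{A}+\tfrac{1-\tau}{B}\big)$. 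Equating this with the left-hand side yields the stated identity.

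There is no genuine obstacle here — the argument is entirely standard. The only points requiring a little care are the handling of the boundary terms (using $v=0$ on $\pl\BB$ both to reduce $\nabla v$ to its normal component in the left-hand computation and to evaluate $F(v)$ at $v=0$) and the bookkeeping that confirms the arbitrary additive constant in the primitive $F$ drops out of the final identity; the regularity of $v$ needed for the integrations by parts is immediate from $-\Delta v\in L^\infty(\BB)$.
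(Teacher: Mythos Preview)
Your proof is correct and follows essentially the same approach as the paper: multiply the equation by $x\cdot\nabla v$, integrate over $\BB$, and evaluate each side via integration by parts, exploiting $v=0$ on $\pl\BB$ and the exact normalizations $\int_{\BB}e^v=A$, $\int_{\BB}e^{\ga v}=B$. The paper's argument is more terse (it writes $(x\cdot\nabla v)e^v=x\cdot\nabla e^v$ directly rather than introducing a primitive $F$), but the content is the same.
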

\begin{proof}
Multiplying \eqref{eq:det} by $x\cdot\nabla v$ and integrating we have
\[
\int_{\BB}(x\cdot\nabla v)(-\Delta v)\,dx=-\frac{1}{2}\int_{\partial\BB}|\nabla v|^2\,d\sigma.
\]
Similarly,
\[
\begin{aligned}
\int_{\BB}(x\cdot\nabla v)\la&\left(\tau\frac{e^v}{\int_{\BB}e^v}
+(1-\tau)\ga\frac{e^{\ga v}}{\int_{\BB}e^{\ga v}}\right)
=\la\int_{\BB} x\cdot\left(\tau\frac{\nabla e^v}{\int_{\BB}e^v}
+(1-\tau)\frac{\nabla e^{\ga v}}{\int_{\BB}e^{\ga v}}\right)\\
=&\la\int_{\pl\BB}\left(\frac{\tau}{\int_{\BB} e^v}+\frac{1-\tau}{\int_{\BB} e^{\ga v}}\right)-2\la
=-2\la+2\pi\la\left(\frac{\tau}{\int_{\BB} e^v}+\frac{1-\tau}{\int_{\BB} e^{\ga v}}\right).
\end{aligned}
\]
\end{proof}
\begin{proof}[Proof of Theorem~\ref{thm:det}]
We prove separately the two cases, corresponding to the
two possible values of the Moser-Trudinger constant $\bl$ defined in \eqref{eq:MTdet}. 
Throughout this proof, for the sake of simplicity, we denote $\bar\la=\bl$. 
\par
\textit{Case~1: $\bar\la=\frac{8\pi}{\{\tau+(1-\tau)\ga\}^2}$}.
\par
Recalling that $\Om=\BB$, and using the Schwarz inequality, we have
\[
\begin{aligned}
\int_{\pl\BB}|\nabla v|^2\,d\sigma=&\int_{\pl\BB}|\frac{\pl v}{\pl\nu}|^2\,d\sigma
\ge\frac{1}{|\pl\BB|}\left(\int_{\pl\BB}(-\frac{\pl v}{\pl\nu})\,d\sigma\right)^2\\
=&\frac{1}{2\pi}\left(\int_{\BB}(-\Delta v)\,dx\right)^2
=\frac{\la^2}{2\pi}\left\{\tau+(1-\tau)\ga\right\}^2.
\end{aligned}
\]
In view of Lemma~\ref{lem:Pohozaev} we deduce that
\[
\la\le\frac{8\pi}{\{\tau+(1-\tau)\ga\}^2}\left\{1-\pi\left(\frac{\tau}{\int_{\BB} e^v}
+\frac{1-\tau}{\int_{\BB} e^{\ga v}}\right)\right\}<\frac{8\pi}{\{\tau+(1-\tau)\ga\}^2}
=\bar\la.
\]
%first absorb the first non-local term
%in \eqref{eq:det} by setting
%\[
%w(x)=v(x)+\log\frac{\la\tau}{\int_\Om e^v}.
%\]
%Then, $w$ satisfies
%\begin{equation}
%\label{eq:detw}
%\begin{cases}
%\begin{aligned}
%-\Delta w=&e^w+\mu e^{\ga w}
%&&\hbox{in\ }\BB\\
%w=&\log\frac{\la\tau}{\int_\Om e^v}
%&&\hbox{on\ }\pl\BB\\
%\int_{\BB}e^w=&\la\tau,\\
%\int_{\BB}e^{\ga w}=&\left(\frac{\la\tau}{\int_{\BB}e^v}\right)^\ga\int_{\BB}e^{\ga v}
%\end{aligned}
%\end{cases}
%\end{equation}
%with 
%\[
%\mu=\frac{\la^{1-\ga}(1-\tau)\ga(\int_\Om e^v)^\ga}{\tau^\ga\int_\Om e^{\ga v}}.
%\]
%Thus, we may absorb the second non-local term by the rescaling
%\[
%z(x):=w(\mu^{-1/(2(1-\ga))}x)+2\log\mu^{-1/(2(1-\ga))},
%\]
%so that $z$ satisfies
%\begin{equation}
%\label{eq:detz}
%\begin{cases}
%\begin{aligned}
%-\Delta z=&e^z+e^{\ga z}
%&&\hbox{in\ }B_{\mu^{1/(2(1-\ga))}}\\
%z=&\frac{1}{1-\ga}\log\frac{\tau\int_{\BB}e^{\ga v}}{(1-\tau)\ga\int_{\BB} e^v}
%&&\hbox{on\ }\pl B_{\mu^{1/(2(1-\ga))}}\\
%\int_{B_{\mu^{1/(2(1-\ga))}}}e^z\,dx=&\int_{\BB}e^w=\la\tau\\
%\int_{B_{\mu^{1/(2(1-\ga))}}}e^{\ga z}\,dx=&\mu\int_{\BB}e^{\ga w}
%=\la(1-\tau)\ga.
%\end{aligned}
%\end{cases}
%\end{equation}
\par
\textit{Case~2:\ }$\bar\la=\frac{8\pi}{\tau}$.
\par
Let $z$ be defined as in Proposition~\ref{prop:reddet}. 
By Lemma \ref{lem:Yang} and Lemma \ref{lem:PT}, 
$z$ coincides in $B_\si$ with the unique solution $z_\al$
to \eqref{eq:PT} with $\al=\al^*$ given by
\[
\al^*=v(0)+\frac{1}{1-\ga}\ln\frac{\tau\int_{\BB}e^{\ga v}}{(1-\tau)\ga\int_{\BB}e^v}.
\]
Consequently, \eqref{eq:mle8pi} assures 
\[
\int_{B_\si}e^z
=\la\tau<\int_{\rr^2}e^{z_{\al^*}}=m_1(\al^*)<8\pi.
\]
In particular, we derive from the above that
\[
\la<\frac{8\pi}{\tau}=\bar\la.
\]
Now, Theorem~\ref{thm:det} is completely established.
\end{proof}
%%%%%%%%%%%%%%%%%%%%%%%%%%%%%%%%%%%%%%%%%%%%%%%%%%%%%%%%%%%%%%%%%%%%%%%%%%%%%%%
%%%%%%%%%%%%%%%%%%%%%%%%%%%%%%%%%%%%%%%%%%%%%%%%%%%%%%%%%%%%%%%%%%%%%%%%%%%%%%%
\section{Proof of Theorem~\ref{thm:Neri}}
\label{sec:stoch}
%%%%%%%%%%%%%%%%%%%%%%%%%%%%%%%%%%%%%%%%%%%%%%%%%%%%%%%%%%%%%%%%%%%%%%%%%%%%%%%
%%%%%%%%%%%%%%%%%%%%%%%%%%%%%%%%%%%%%%%%%%%%%%%%%%%%%%%%%%%%%%%%%%%%%%%%%%%%%%%
Our first aim in this section is to establish the following \lq\lq converse" for
Proposition~\ref{prop:redstoch}.
It is convenient to define
\beq
\label{def:btg}
\btg:=\frac{1}{1-\ga}\ln\frac{\tau}{(1-\tau)\ga}.
\eeq
\begin{prop}
\label{prop:redconv}
Let $z$ be a solution to \eqref{eq:PT} with
$\al>\btg$.
Let $\si>0$ be defined by
\beq
\label{def:si}
z\vert_{\pl B_\si}=\btg
\eeq
and let $a>0$ be defined by
\beq
\label{def:a}
\frac{a}{\si^2}=e^{\btg}=\left[\frac{\tau}{(1-\tau)\ga}\right]^{\frac{1}{1-\ga}}.
\eeq
Then, the function $v$ defined by
\beq
\label{def:vred}
v(x)=z(\si x)+\ln\frac{\si^2}{a}=z(\si x)-\btg
\eeq
is a solution to problem~\eqref{eq:stochB} with
\beq
\label{def:laz}
\lambda=\int_{B_\si}\left(e^z+\frac{e^{\ga z}}{\ga}\right).
\eeq
\end{prop}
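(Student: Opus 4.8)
This proposition is the converse of Proposition~\ref{prop:redstoch}: there we started from a solution $v$ of \eqref{eq:stochB} and produced a solution $z$ of \eqref{eq:PT}; now we want to reverse the construction, starting from $z$ and recovering $v$. The key observation is that the inversion is essentially forced: if $v$ is defined by \eqref{def:vred} then $z(y) = v(y/\si) + \btg = v(y/\si) + \ln(a/\si^2)$, which is exactly the change of variables \eqref{def:z} in Lemma~\ref{lem:redab} with that choice of $a$ and with $b$ determined by \eqref{def:mu}, i.e.\ $b = a^\ga \si^{2(1-\ga)}$. So the natural strategy is to run Lemma~\ref{lem:redab} ``in reverse'': verify that $v$ as defined solves a PDE of the form \eqref{eq:ab} with these $a,b$, then check that the coefficients $a,b$ are consistent with the mean-field normalization in \eqref{eq:stochB}, and finally read off $\la$.

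\emph{Step 1: $v$ solves an equation of the form \eqref{eq:ab}.} Since $z$ solves $-\Delta z = e^z + e^{\ga z}$ in $B_\si$ (which is nonempty and open because $\al > \btg$ and $z$ is radially strictly decreasing, so the level set $\{z = \btg\}$ is a genuine circle by Lemma~\ref{lem:PT}), undoing the dilation $y = \si x$ and the additive shift gives, by a direct computation, $-\Delta v = a e^v + b e^{\ga v}$ in $B_1$, with $a = \si^2 e^{\btg}$ (from \eqref{def:a}) and $b = a^\ga \si^{2(1-\ga)}$. On $\pl B_1$ we have $v = z|_{\pl B_\si} - \btg = 0$ by \eqref{def:si}, so the boundary condition is satisfied. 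This is just the statement of Lemma~\ref{lem:redab} read from right to left; alternatively one can cite the computation in its proof verbatim.

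\emph{Step 2: matching the nonlocal normalization.} It remains to see that $a$ and $b$ are precisely the coefficients appearing in \eqref{eq:stochB}, i.e.\ that
\[
a = \frac{\la\tau}{\int_{\BB}\{\tau e^v + (1-\tau) e^{\ga v}\}}, \qquad
b = \frac{\la(1-\tau)\ga}{\int_{\BB}\{\tau e^v + (1-\tau) e^{\ga v}\}},
\]
for the value of $\la$ given by \eqref{def:laz}. From Lemma~\ref{lem:redab} we have the mass identities $\int_{\BB} a e^v = \int_{B_\si} e^z$ and $\int_{\BB} b e^{\ga v} = \int_{B_\si} e^{\ga z}$. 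First note $b/a = \si^{2(1-\ga)} a^{\ga - 1} = (\si^2/a)^{1-\ga} = e^{-(1-\ga)\btg} = (1-\tau)\ga/\tau$ by \eqref{def:a}, so $b = \tfrac{(1-\tau)\ga}{\tau} a$; this already gives the correct \emph{ratio} between the two putative coefficients, so it suffices to identify $a$ alone. Compute
\[
\int_{\BB}\{\tau e^v + (1-\tau) e^{\ga v}\}
= \frac{\tau}{a}\int_{B_\si} e^z + \frac{1-\tau}{b}\int_{B_\si} e^{\ga z}
= \frac{\tau}{a}\int_{B_\si} e^z + \frac{\tau}{a\ga}\int_{B_\si} e^{\ga z}
= \frac{\tau}{a}\int_{B_\si}\left(e^z + \frac{e^{\ga z}}{\ga}\right),
\]
where in the middle step we used $(1-\tau)/b = \tau/(a\ga)$. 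Hence $a \int_{\BB}\{\tau e^v + (1-\tau) e^{\ga v}\} = \tau \int_{B_\si}(e^z + e^{\ga z}/\ga) = \tau\la$ with $\la$ as in \eqref{def:laz}, which is the first displayed identity; multiplying by $b/a = (1-\tau)\ga/\tau$ gives the second. Therefore $v$ solves \eqref{eq:stochB} with this $\la$, as claimed.

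\emph{Main obstacle.} There is no deep difficulty here — the whole proof is bookkeeping with the substitution in Lemma~\ref{lem:redab}. The one point deserving care is the \emph{well-posedness} of the definition \eqref{def:si} of $\si$: one must invoke Lemma~\ref{lem:PT} to know that the radial solution $z = z_\al$ is strictly decreasing and ranges over $(-\infty, \al]$, so that the equation $z(\si) = \btg$ has a unique positive solution exactly because $\al > \btg$, and then that $B_\si \subset \rr^2$ is the precise sublevel set on which $z > \btg$. One should also confirm that $\la$ in \eqref{def:laz} is finite and positive, which follows since $m(\al) = \int_{\rr^2}(e^z + e^{\ga z}) < \infty$ from \eqref{eq:PT} dominates $\int_{B_\si}(e^z + e^{\ga z}/\ga)$ when $\ga < 1$ (one has $e^{\ga z}/\ga \le e^{\ga z}/\ga$ and $\int_{\rr^2} e^{\ga z} < \infty$), and positivity is clear. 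With these remarks in place the verification is complete.
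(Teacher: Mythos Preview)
Your proof is correct and follows essentially the same approach as the paper's: both verify directly that $v$ satisfies $-\Delta v = a e^v + b e^{\ga v}$ in $\BB$ with $v=0$ on $\pl\BB$, then compute $\int_{\BB}\{\tau e^v+(1-\tau)e^{\ga v}\}$ via the change of variables to identify the nonlocal coefficients and read off $\la$. Your explicit framing through Lemma~\ref{lem:redab} and the ratio $b/a$ is a tidy packaging of the same computation (the paper instead recomputes the mass identities and uses the identity $\tau e^{-\btg}=(1-\tau)\ga e^{-\ga\btg}$, which is equivalent to your $b/a=(1-\tau)\ga/\tau$); note only the harmless typo ``$e^{\ga z}/\ga\le e^{\ga z}/\ga$'' in your finiteness remark.
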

\begin{proof}
The function $v$ defined in \eqref{def:vred} satisfies the equation:
\beq
\label{eq:vred}
-\Delta v=a e^v+\si^{2(1-\ga)}a^\ga e^{\ga v}
=\si^2\left[\frac{a}{\si^2}\,e^v+\left(\frac{a}{\si^2}\right)^\ga e^{\ga v}\right].
\eeq
Hence, we obtain from \eqref{eq:stochB} the necessary condition
\[
\frac{a}{\si^2\la\tau}=\left(\frac{a}{\si^2}\right)^\ga\frac{1}{\la(1-\tau)\ga}
=\frac{1}{\si^2\int_{\BB}\{\tau e^v+(1-\tau)e^{\ga v}\}},
\]
from which \eqref{def:a} follows.
On the other hand,  \eqref{def:si} is a consequence of the Dirichlet boundary condition
for $v$.
\par
Finally, we compute, for $v$ give by \eqref{def:vred}:
\[
\bal
\int_{\BB}e^v\,dx=&
\frac{\si^2}{a}\int_{\BB}e^{z(\si x)}\,dx=\frac{e^{-\btg}}{\si^2}\int_{\Bs}e^z,\\
\int_{\BB}e^{\ga v}
=&\left(\frac{\si^2}{a}\right)^\ga\int_{\BB}e^{\ga z(\si x)}\,dx=\frac{e^{-\ga\btg}}{\si^2}\int_{\Bs}e^{\ga z}.
\eal
\]
In view of the elementary identity:
\beq
\label{eq:elemid}
\tau e^{-\btg}=\left[\frac{(1-\tau)\ga}{\tau^\ga}\right]^{\frac{1}{1-\ga}}
=(1-\tau)\ga e^{-\ga\btg},
\eeq
we may write
\beq
\label{eq:intid}
\int_{\BB}\{\tau e^v+(1-\tau)e^{\ga v}\}
=\si^{-2}\left[\frac{(1-\tau)\ga}{\tau^\ga}\right]^{\frac{1}{1-\ga}}
\int_{\Bs}(e^z+\frac{e^{\ga z}}{\ga}).
\eeq
Inserting \eqref{eq:elemid}--\eqref{eq:intid} into \eqref{eq:vred} we deduce that
\beq
\bal
-\Delta v=&\si^2\left[e^{\btg}\,e^v+e^{\ga\btg} e^{\ga v}\right]
=\si^2\left[\frac{\tau^\ga}{(1-\tau)\ga}\right]^{\frac{1}{1-\ga}}
\left\{\tau e^v+(1-\tau)\ga e^{\ga v}\right\}\\
=&\frac{\tau e^v+(1-\tau)\ga e^{\ga v}}{\int_{\BB}\{\tau e^v+(1-\tau)e^{\ga v}\}}
\,\int_{\Bs}(e^z+\frac{e^{\ga z}}{\ga}).
\eal
\eeq
We deduce that $v$ defined by \eqref{def:vred} with \eqref{def:si}--\eqref{def:a} 
is a solution to problem~\eqref{eq:stochB} with
\[
\la=\int_{\Bs}(e^z+\frac{e^{\ga z}}{\ga}),
\]
as asserted.
\end{proof}
In view of Proposition~\ref{prop:redconv} it is natural to consider, for fixed $\tau,\ga\in(0,1)$
and $\al>\btg$, where $\btg$ is defined in \eqref{def:btg},
solution pairs $(z,\si)=(z_\ga(\al),\si_{\tau,\ga}(\al))$ 
where $z_\ga(\al)$ is a solution to problem~\eqref{eq:PT}
and $\si_{\tau,\ga}(\al)$ is uniquely defined by \eqref{def:si}.
Correspondingly, we obtain $\Lambda_{\tau,\ga}(\al)$ defined by
\beq
\label{def:Latg}
\Lambda_{\tau,\ga}(\al)=\int_{B_{\si_{\tau,\ga}(\al)}}\left\{e^{z_{\ga}(\al)}
+\frac{e^{\ga z_{\ga}(\al)}}{\ga}\right\}.
\eeq
%Note that $\alpha>\btg$. 
%Let
%\beq
%\label{def:btg}
%\btg:=\frac{1}{1-\ga}\ln\frac{\tau}{(1-\tau)\ga}
%\eeq
Let
\beq
\label{def:II}
\bal
\mathcal I_{\tau,\ga}:=&\{\la>0:\ \hbox{there exists a solution to \eqref{eq:stochB}}\}\\
\mathcal I_{\tau,\ga}':=&\left\{\Lambda>0:\ \Lambda=\Lambda_{\tau,\ga}(\al)\ 
\hbox{for some }\al>\btg\right\}.
\eal
\eeq
In view of Proposition \ref{prop:redstoch} and Proposition~\ref{prop:redconv}, we have
\beq
\label{eq:II}
\mathcal I_{\tau,\ga}=\mathcal I_{\tau,\ga}'.
\eeq
Therefore, we are reduced to the study of $\mathcal I_{\tau,\ga}'$.
Setting
\[
\Lambda_{\tau,\ga}^*:=\sup\mathcal I_{\tau,\ga}',
\]
we have
\[
\lstar=\Lambda_{\tau,\ga}^*\ge8\pi,
\]
where $\lstar$ is defined in \eqref{def:lastar}.
\begin{prop}
\label{prop:siLa}
Let $\tau,\ga\in(0,1)$ be fixed.
The following properties hold true:
\begin{itemize}
\item[(i)]
The quantities $\sitg(\al)$, $\Latg(\al)$
are continuous functions of $\al\in[\btg,+\infty)$,
provided that we set $\sitg(\btg)=\Latg(\btg)=0$.
\item[(ii)]
There holds
\[
\lim_{\al\to+\infty}\sitg(\al)=0.
\] 
\item[(iii)]
There holds
\[
\lim_{\al\to+\infty}\Latg(\al)=8\pi.
\]
\end{itemize}
\end{prop}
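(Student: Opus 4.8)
The plan is to exploit the explicit reduction already in place: for fixed $\ga$, $z_\ga(\al)$ is the unique solution to~\eqref{eq:PT} furnished by Lemma~\ref{lem:PT}, and its continuous dependence on the parameter $\al$ follows from the ODE formulation in Lemma~\ref{lem:Yang} (continuous dependence on initial data for $\eta' = F(r,\eta)$ with $\eta(0)=\al$, uniformly on compact $r$-intervals). Since $z_\ga(\al)$ is radial and strictly decreasing in $r$ with $z_\ga(\al)(0)=\al$ and $z_\ga(\al)(r)\to-\infty$ as $r\to+\infty$ (at the logarithmic rate $-\beta\ln r$ from Lemma~\ref{lem:Yang}), for every $\al>\btg$ there is a unique radius $\sitg(\al)$ with $z_\ga(\al)\big|_{\pl B_{\sitg(\al)}}=\btg$; this is exactly~\eqref{def:si}. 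For part~(i), I would first argue $\sitg$ is continuous on $(\btg,+\infty)$ by the implicit function theorem applied to $(\al,\si)\mapsto z_\ga(\al)(\si)-\btg$, whose $\si$-derivative is $z_\ga(\al)'(\si)<0$ (strict monotonicity), hence nonvanishing; then $\Latg(\al)$, being an integral over $B_{\sitg(\al)}$ of a continuous integrand depending continuously on $\al$, is continuous as well. For continuity at $\al=\btg$ with the convention $\sitg(\btg)=\Latg(\btg)=0$: as $\al\downarrow\btg$, since $z_\ga(\al)(0)=\al\downarrow\btg$ and $z_\ga(\al)$ is strictly decreasing, the level set where $z_\ga(\al)=\btg$ is forced toward $r=0$; a continuity-of-the-flow argument gives $\sitg(\al)\to 0$, and then $\Latg(\al)\to 0$ since the integrand is bounded near the origin.

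For part~(ii), the claim is that $\sitg(\al)\to 0$ as $\al\to+\infty$. Here I would use the Liouville-bubble rescaling: set $w_\al(x)=z_\ga(\al)(e^{-\al/2}x)+\al$ (or the analogous rescaling adapted to the cosmic-string problem~\eqref{eq:PT}), so that $w_\al(0)=0$ and, along $\al\to+\infty$, $w_\al$ converges locally uniformly to a limiting entire solution; in these rescaled coordinates the level $\{z_\ga(\al)=\btg\}$ sits at a bounded distance from the origin, i.e. $\sitg(\al)=O(e^{-\al/2})\to 0$. (The precise exponent depends on the normalization in~\eqref{eq:PT}, but the mechanism — a single bubble concentrating at the origin as $\al\to+\infty$ — is what forces the level set $\{z=\btg\}$ to collapse.) The key input is that $\btg$ is a \emph{fixed} finite value independent of $\al$, so it is eventually below the bubble's peak value $\al$ and the crossing radius shrinks.

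Part~(iii) is the substantive one and the main obstacle. One writes
\[
\Latg(\al)=\int_{B_{\sitg(\al)}}\!\!\left(e^{z_\ga(\al)}+\frac{e^{\ga z_\ga(\al)}}{\ga}\right)
= \int_{\rr^2}\!\!\left(e^{z_\ga(\al)}+\frac{e^{\ga z_\ga(\al)}}{\ga}\right)
- \int_{\rr^2\setminus B_{\sitg(\al)}}\!\!\left(e^{z_\ga(\al)}+\frac{e^{\ga z_\ga(\al)}}{\ga}\right),
\]
so that $\Latg(\al)=m_1(\al)+\frac{m_\ga(\al)}{\ga}-R(\al)$ in the notation~\eqref{def:mm}, where $R(\al)$ is the exterior tail. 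By Lemma~\ref{lem:CGS}, $m_1(\al)+\frac{m_\ga(\al)}{\ga}=\frac{(m_1(\al)+m_\ga(\al))^2}{8\pi}=\frac{m(\al)^2}{8\pi}$, and Lemma~\ref{lem:PT} gives $m(\al)\to 8\pi\max\{1,(1-\ga)/\ga\}$ as $\al\to+\infty$. So I would show this max is attained at $1$ in the relevant range — \textbf{this requires care}: $\max\{1,(1-\ga)/\ga\}=1$ only when $\ga\ge 1/2$, so for $\ga<1/2$ one has $m(\al)\to 8\pi(1-\ga)/\ga$ and $\frac{m(\al)^2}{8\pi}\to 8\pi\big(\frac{1-\ga}{\ga}\big)^2\ne 8\pi$ in general. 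The resolution must be that the tail $R(\al)$ does \emph{not} vanish; rather, $z_\ga(\al)\big|_{\pl B_{\sitg(\al)}}=\btg$ is a fixed finite level, so the exterior region $\rr^2\setminus B_{\sitg(\al)}$ is precisely where $z_\ga(\al)\le\btg$, and on that region the integrand is controlled by $e^{\btg}$ times the measure — but the measure is infinite. The honest computation is to integrate the equation $-\Delta z=e^z+e^{\ga z}$ over $B_{\sitg(\al)}$ and use the divergence theorem: $\int_{B_{\sitg(\al)}}(e^z+e^{\ga z}) = -\int_{\pl B_{\sitg(\al)}}\pl_\nu z\,d\si = 2\pi\sitg(\al)\,(-z_\ga(\al)'(\sitg(\al)))$, and a \emph{separate} flux computation (multiplying by a suitable weight, or using the Pohozaev-type identity underlying Lemma~\ref{lem:CGS}) relates $\int_{B_\si}(e^z+\frac{e^{\ga z}}{\ga})$ to boundary data at $\sitg(\al)$. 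As $\al\to+\infty$, $\sitg(\al)\to 0$ and the boundary values $z_\ga(\al)(\sitg(\al))=\btg$, $z_\ga(\al)'(\sitg(\al))$ have limits dictated by the bubble profile near its peak; the upshot is that the total flux through the \emph{fixed-level} sphere tends to exactly $8\pi$, matching the standard mean-field quantization. The crux — and where I expect to spend the most effort — is making this flux-at-a-fixed-level computation precise, i.e. showing $2\pi\sitg(\al)\,(-z_\ga(\al)'(\sitg(\al)))$ and the associated $1/\ga$-weighted flux both converge, and that their combination yields $8\pi$ rather than the naive $m(\infty)^2/8\pi$; this is exactly the point distinguishing the Dirichlet-level truncation in~\eqref{eq:zstoch} from the full-plane mass in Lemma~\ref{lem:PT}.
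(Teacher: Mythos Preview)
Your treatment of part~(i) is essentially the paper's: continuous dependence on initial data for the ODE~\eqref{eq:Yang}, plus the strict monotonicity of $z_\ga(\al)$ in $r$, gives continuity of $\sitg$ and then of $\Latg$.

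For part~(ii) the paper argues by contradiction via a Brezis--Merle alternative: if $\si_k\to\si_0>0$ along some $\al_k\to+\infty$, one compares $z_k$ with an auxiliary solution $\uz_k$ on $B_{\si_0/2}$, shows $V_ke^{z_k}\stackrel{\ast}{\rightharpoonup}n\de_0+s$ with $n\ge4\pi$, and then Fatou forces $\int e^{z_k}\to+\infty$, contradicting the mass bound. Your rescaling heuristic is different and, as written, has a flaw: after setting $w_\al(y)=z_\ga(\al)(e^{-\al/2}y)-\al$ (so $w_\al(0)=0$), the level set $\{z_\ga(\al)=\btg\}$ becomes $\{w_\al=\btg-\al\}$, which drifts to $-\infty$, hence to \emph{infinity} in the rescaled radial variable, not to a bounded distance. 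One can still extract $\sitg(\al)\to0$ from the asymptotic decay rate of $w_\al$ at infinity, but this requires uniform control of the logarithmic tail of $w_\al$ as $\al\to+\infty$, which you have not supplied. So part~(ii) is not yet proved in your proposal.

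The serious gap is part~(iii). Your decomposition $\Latg(\al)=m_1(\al)+m_\ga(\al)/\ga-R(\al)$ together with Lemma~\ref{lem:CGS} and Lemma~\ref{lem:PT} does not close: as you correctly note, for $\ga<1/2$ one has $m(\al)\to8\pi(1-\ga)/\ga$ and so $m(\al)^2/(8\pi)\not\to8\pi$; the tail $R(\al)$ must then carry a specific nonzero limit, and your flux/Pohozaev sketch does not establish this. The Pohozaev identity on $B_{\sitg(\al)}$ in fact yields $\Latg(\al)=\pi\sitg(\al)^2\big(e^{\btg}+e^{\ga\btg}/\ga\big)+\frac{1}{8\pi}\big(\int_{B_{\sitg(\al)}}(e^{z}+e^{\ga z})\big)^2$, which only reduces the problem to showing $\int_{B_{\sitg(\al)}}(e^{z}+e^{\ga z})\to8\pi$; this is precisely the tail issue again, and is not automatic when $\ga<1/2$.

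The paper avoids this entirely. It uses Proposition~\ref{prop:redconv} to translate each pair $(z_k,\si_k)$ back into a solution $v_k$ of the stochastic problem~\eqref{eq:stochB} on $\BB$ with parameter $\la_k=\Latg(\al_k)$. Since $v_k(0)=\al_k-\btg\to+\infty$, $(\la_k,v_k)$ is a blow-up sequence with a single blow-up point at the origin, and the mass quantization results of \cite{RiZe2016,RicciardiTakahashi} for problem~\eqref{eq:stochB} force $\la_k\to8\pi$ directly. The point is that the stochastic problem has $\calP$-independent quantization (Proposition~\ref{prop:MTP}--(ii)), so the limit is $8\pi$ regardless of $\ga$; this is exactly what your cosmic-string-side computation cannot see without a delicate tail analysis. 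You should replace the flux sketch by this transfer argument.
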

\begin{proof}
Throughout this proof, for simplicity we omit the subscripts $\tau,\ga$.
\par
Proof of (i). 
Given $\al\in(\btg,+\infty)$, $\si(\al)$ and $\Lambda(\al)$ 
are uniquely determined by $\btg$ and $z(\alpha)$, 
the unique solution to \eqref{eq:PT}, only. 
Moreover, it is also the unique solution to \eqref{eq:Yang} for $a=b=1$. 
Therefore, the desired continuity follows from the standard ODE theory, since $\btg$ is independent of $\alpha$. 
\par
Proof of (ii).
Let $\al_k\to+\infty$. Let $(z_k,\si_k)$ be the solution pair to
\eqref{eq:PT} and \eqref{def:si} with $\al=\al_k$
and $\Lambda_k$ defined by \eqref{def:Latg}.
We claim that $\lim_{k\to+\infty}\si_k=0$.
Indeed, suppose that there exists $\si_0>0$ and a subsequence,
still denoted $\si_k$, such that $\lim_{k\to+\infty}\si_k=\si_0>0$.
We use an argument from \cite{BrezisMerle}.
Let $\uz_k$ be defined as the unique solution to the problem
\[
\bca
-\Delta\uz_k=e^{z_k}+e^{\ga z_k}&\hbox{in\ }B_{\si_0/2}\\
\uz_k=z_k|_{\pl B_{\si_k}}=\btg
&\hbox{on\ }\pl B_{\si_0/2}.
\eca
\]
Then, for $k$ sufficiently large, we have $z_k\ge\uz_k\ge\btg$ in $B_{\si_0/2}$ 
in view of the maximum principle.
On the other hand, setting $V_k(x):=1+e^{-(1-\ga)z_k}$ we may write
\[
\bca
-\Delta z_k=e^{z_k}+e^{\ga z_k}=V_ke^{z_k}&\hbox{in }B_{\si_k}\\
\int_{B_{\si_k}}V_ke^{z_k}\le\int_{B_{\si_k}}\{e^{z_k}+\frac{e^{\ga z_k}}{\ga}\},
\eca
\]
with 
\[
\bal
&\|V_k\|_{L^\infty(B_{\si_0})}\le 1+e^{-(1-\ga)\btg},\\
&z_k(0)=\max_{B_{\si_0}}z_k\to+\infty,\\
&\int_{B_{\si_k}}e^{z_k}\le m_1(\al_k)\le8\pi,
\qquad\ \int_{B_{\si_k}}e^{\ga z_k}\le m_\ga(\al_k)\le\frac{8\pi}{\ga},
\eal
\]
where $m_1(\al_k)$, $m_\ga(\al_k)$ are the masses defined in \eqref{def:mm}.
In view of an adaptation of the Brezis-Merle alternative,
as may be found, e.g., in \cite{ORS,DeMarchisRicciardi}, we conclude that
there exist $n\ge4\pi$ and $s\ge0$, $s\in L^1(B_{\si_0})$
such that, up to subsequences,
\[
e^{z_k}+e^{\ga z_k}=V_ke^{z_k}\stackrel{\ast}{\rightharpoonup}n\de_0(dx)+s(x)dx.
\]
We deduce that 
\[
\uz_k(x)\ge\frac{n}{2\pi}G_{B_{\si_0/2}}(x,0)+\btg
\]
where $G_{B_{\si_0/2}}(\cdot,\cdot)$ denotes the Green's function on $B_{\si_0/2}$.
Since $n/(2\pi)\ge2$, Fatou's lemma implies that
\[
\int_{B_{\si_0/2}}e^{z_k}\ge\int_{B_{\si_0/2}}e^{\uz_k}\to+\infty
\]
as $k\to+\infty$, a contradiction.
\par
Proof of (iii).
In view of Proposition~\ref{prop:redconv}, setting
\[
\bal
&\la_k=\int_{B_{\si_k}}\{e^{z_k}+\frac{e^{\ga z_k}}{\ga}\}=\Lambda_k,\\
&v_k(x)=z_k(\si_kx)-\btg,
\eal
\]
we obtain a solution $v_k$ to problem~\eqref{eq:stochB} with $\la=\la_k$.
Moreover, we have
\[
v_k(0)=\al_k-\btg\to+\infty.
\]
Therefore $(\la_k,v_k)$ is a blowing up solution sequence for 
\eqref{eq:stochB} with a unique blow-up point located at $0$. 
Now, in view of the mass quantization results as established 
in \cite{RiZe2016, RicciardiTakahashi},
we conclude that $\lim_{k\to+\infty}\la_k=\lim_{k\to+\infty}\Lambda_k=8\pi$.
\end{proof}
Now, we are ready to prove our main result for \eqref{eq:stochB}, 
using Proposition \ref{prop:siLa} and Lemma \ref{lem:PT}.
\begin{proof}[Proof of Theorem~\ref{thm:Neri}]
We fix $\ga\in(0,1)$.
We observe that $\btg$ as defined in \eqref{def:btg} is monotonically increasing
with respect to $\tau\in(0,1)$ and $\lim_{\tau\to0^+}\btg=-\infty$.
Consequently, for any $\tau_1,\tau_2\in(0,1)$ with $\tau_1<\tau_2$
and for any fixed $\al\in(\beta_{\tau_2,\ga},+\infty)$,
we have $\si_{\tau_1,\ga}(\al)>\si_{\tau_2,\ga}(\al)$
and consequently $\Lambda_{\tau_1,\ga}(\al)>\Lambda_{\tau_2,\ga}(\al)>0$.
Moreover, $\lim_{\tau\to0^+}\sitg(\al)=+\infty$.
\par
Let $\al_0\in\rr$ and $z_0=z_0(x)$ be uniquely defined by
the problem
\[
\bca
-\Delta z_0=e^{z_0}+e^{\ga z_0},&\hbox{in }\rr^2\\
z_0(0)=\max_{\rr^2}z_0=\al_0\\
\int_{\rr^2}\{e^{z_0}+e^{\ga z_0}\}=\frac{8\pi}{2}(1+\frac{1}{\ga})>8\pi.
\eca
\]
We deduce that $\lim_{\tau\to0^+}\Lambda_{\tau,\ga}(\al_0)\ge8\pi(1+\ga^{-1})/2$.
In particular, there exists $0<\tau_0=\tau_0(\ga)\ll1$ such that for all $\tau\in(0,\tau_0)$
we have $\Lambda_{\tau,\ga}(\al_0)>8\pi$.
\end{proof}
%%%%%%%%%%%%%%%%%%%%%%%%%%%%%%%%%%%%%%%%%%%%%%%%%%%%%%%%%%%%%%%%%%%%%%%%%%%%%%%
%%%%%%%%%%%%%%%%%%%%%%%%%%%%%%%%%%%%%%%%%%%%%%%%%%%%%%%%%%%%%%%%%%%%%%%%%%%%%%%
\section{Appendix: remarks on $\bl$}
Our aim in this Appendix is to show that the values of $\bl$ given in 
\eqref{eq:MTdet} may be interpreted in terms of Liouville bubbles.
Indeed, for $\de>0$ let
\[
\Ud(x):=\ln\frac{8\de^2}{(\de^2+|x|^2)^2},
\quad x\in\rr^2
\]
be the family of \lq\lq Liouville bubbles", uniquely determined as solutions to the problem
\[
\bca
-\Delta U=e^u&\hbox{in }\rr^2\\
\int_{\rr^2}e^U<+\infty.
\eca
\]
Let $P\Ud$ be the projection of $\Ud$ on $H_0^1(\BB)$, namely
\[
P\Ud(x)=\ln\frac{(\de^2+1)^2}{(|x|^2+1)^2}.
\]
\begin{prop}
\label{prop:Jblowdown}
For every $\ga\in(0,1)$ there exists some $t_\ga>0$ such that for all 
$\la>\bl$ there holds
\[
J_\la(t_\ga P\Ud)\to-\infty
\]
as $\de\to0^+$.
\end{prop}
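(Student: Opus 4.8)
The plan is to insert the test function $v=tP\Ud$ (for a constant $t=t_\ga>0$ to be chosen, which will in general also depend on the fixed $\tau$) into the deterministic functional $J_\la=\Jd$ of \eqref{def:Jdet}, expand as $\de\to0^+$, and show that the coefficient of the divergent term $\ln\frac1\de$ is strictly negative exactly when $\la>\bl$.

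First I would record the standard asymptotics of the projected bubble. Since $P\Ud-\Ud$ is a constant, $\int_\BB|\nabla P\Ud|^2=\int_\BB|\nabla\Ud|^2=32\pi\ln\frac1\de+O(1)$; and from the explicit formula $\int_\BB e^{sP\Ud}=(\de^2+1)^{2s}\,\pi\int_0^1(\de^2+u)^{-2s}\,du$ one gets, as $\de\to0^+$,
\[
\ln\int_\BB e^{sP\Ud}=(4s-2)\ln\tfrac1\de+O(1)\quad(s>\tfrac12),\qquad
\ln\int_\BB e^{sP\Ud}=O(1)\quad(0<s<\tfrac12).
\]
Applying this with $s=t$ and $s=\ga t$ gives, for every $t>1/2$ with $\ga t\neq1/2$,
\[
\Jd(tP\Ud)=c(t)\,\ln\tfrac1\de+O(1),\qquad
c(t):=16\pi t^2-\la\tau(4t-2)-\la(1-\tau)\max\{4\ga t-2,\,0\}.
\]
So it suffices to choose $t=t_\ga$ inside the appropriate concentration regime so that $c(t_\ga)<0$ for all $\la>\bl$, distinguishing the two expressions for $\bl$ in \eqref{eq:MTdet}.

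In the regime $\sqrt\tau/(1+\sqrt\tau)<\ga<1$, where $\bl=8\pi/s^2$ with $s:=\tau+(1-\tau)\ga\in(0,1)$, I would take $t_\ga:=1/s$. The elementary inequality $\sqrt\tau/(1+\sqrt\tau)>\tau/(1+\tau)$ (valid for every $\tau\in(0,1)$) yields $\ga>\tau/(1+\tau)$, which is equivalent to $2\ga>s$, so $\ga t_\ga=\ga/s>1/2$; also $t_\ga=1/s>1>1/2$. Hence both exponential terms concentrate and, using $\tau(4t-2)+(1-\tau)(4\ga t-2)=4st-2$,
\[
c(t_\ga)=16\pi t_\ga^2-\la(4st_\ga-2)=\frac{16\pi}{s^2}-2\la<0\iff\la>\frac{8\pi}{s^2}=\bl.
\]
In the complementary regime $0<\ga\le\sqrt\tau/(1+\sqrt\tau)$, where $\bl=8\pi/\tau$, I would take $t_\ga:=1$; since $\ga\le\sqrt\tau/(1+\sqrt\tau)<1/2$, only the first exponential concentrates and $c(1)=16\pi-2\la\tau<0$ if and only if $\la>8\pi/\tau=\bl$. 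In either case $c(t_\ga)<0$ whenever $\la>\bl$, whence $\Jd(t_\ga P\Ud)=c(t_\ga)\ln\frac1\de+O(1)\to-\infty$.

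The bubble estimates above are routine. The only point requiring care is the selection of $t_\ga$ and the verification that $t_\ga$ and $\ga t_\ga$ lie \emph{strictly} inside the appropriate concentration regime, so that neither the $O(1)$ contribution of a non-concentrating term nor the $O(\ln\ln\frac1\de)$ growth that would occur at a borderline exponent ($t=1/2$ or $\ga t=1/2$) disturbs the leading order; the inequality $\sqrt\tau/(1+\sqrt\tau)>\tau/(1+\tau)$ is precisely what makes the choice $t_\ga=1/s$ admissible in the first regime, which in turn explains the role of the threshold $\sqrt\tau/(1+\sqrt\tau)$ appearing in \eqref{eq:MTdet}.
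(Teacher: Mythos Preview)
Your proof is correct and follows the same overall strategy as the paper: insert $tP\Ud$ into $\Jd$, use the bubble asymptotics of Lemma~\ref{lem:PUdexp}, split into the two cases of \eqref{eq:MTdet}, and show that the coefficient of $\ln(1/\de)$ is negative. The difference lies in the selection of $t_\ga$. The paper proceeds via the auxiliary Lemmas~\ref{lem:gataufirst}--\ref{lem:gatausecond}, which locate $t_\ga$ inside an interval $(t_-,t_+)$ determined by a $\la$-dependent quadratic inequality; as written there, $t_\ga$ is chosen near $t_+$ and hence varies with $\la$. You instead pick the explicit values $t_\ga=1/(\tau+(1-\tau)\ga)$ in the first regime and $t_\ga=1$ in the second; these are independent of $\la$ and render the leading coefficient $c(t_\ga)$ affine in $\la$, so a single computation covers every $\la>\bl$ simultaneously. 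Your choice therefore matches the quantifier order in the statement (``$\exists\,t_\ga\ \forall\,\la>\bl$'') more directly, while the paper's route via Lemmas~\ref{lem:gataufirst}--\ref{lem:gatausecond} has some independent interest in motivating the threshold $\sqrt\tau/(1+\sqrt\tau)$. The key enabling observation is the same in both arguments, namely the elementary inequality $\sqrt\tau/(1+\sqrt\tau)>\tau/(1+\tau)$, which is exactly what guarantees $\ga t_\ga>1/2$ in the first regime.
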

The following asymptotic behaviors are readily verified.
\begin{lemma}
\label{lem:PUdexp}
The following expansions hold, as $\de\to0^+$:
\[
\bal
\int_{\BB}|\nabla P\Ud(x)|^2\,dx=&16\pi\ln\frac{1}{\de^2}+O(1)\\
\int_{\BB}e^{aP\Ud(x)}\,dx=&
\bca
O(1),&\hbox{if }0<a<\frac{1}{2}\\
\ln\ln\frac{1}{\de^2}+O(1),
&\hbox{if }a=\frac{1}{2}\\
(2a-1)\ln\frac{1}{\de^2}+O(1),
&\hbox{if }\frac{1}{2}<a<1,
\eca
\eal
\]
for any $a\in(0,1)$.
\end{lemma}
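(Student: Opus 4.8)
The plan is to reduce both families of integrals to one-dimensional integrals by exploiting the radial structure of $P\Ud$, and then to evaluate them in closed form. Recall that $P\Ud$ is the projection of the Liouville bubble onto $H_0^1(\BB)$, so that $P\Ud(x)=\ln\frac{(1+\de^2)^2}{(\de^2+|x|^2)^2}=2\ln(1+\de^2)-2\ln(\de^2+|x|^2)$; in particular $P\Ud$ depends on $x$ only through $|x|^2$, which is what makes every integral below elementary.

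For the Dirichlet energy I would first compute $\nabla P\Ud(x)=-4x/(\de^2+|x|^2)$, so that $|\nabla P\Ud(x)|^2=16|x|^2/(\de^2+|x|^2)^2$. Passing to polar coordinates and substituting $u=|x|^2$ gives
\[
\int_{\BB}|\nabla P\Ud|^2\,dx=16\pi\int_0^1\frac{u}{(\de^2+u)^2}\,du .
\]
The integrand splits as $\frac{1}{\de^2+u}-\frac{\de^2}{(\de^2+u)^2}$, whose primitive is $\ln(\de^2+u)+\frac{\de^2}{\de^2+u}$; evaluating between $0$ and $1$ yields $\ln\frac{1+\de^2}{\de^2}-\frac{1}{1+\de^2}=\ln\frac{1}{\de^2}+O(1)$, whence $\int_{\BB}|\nabla P\Ud|^2\,dx=16\pi\ln\frac{1}{\de^2}+O(1)$, which is the first expansion.

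For the exponential terms the same substitution $u=|x|^2$ gives, for every $a\in(0,1)$,
\[
\int_{\BB}e^{aP\Ud}\,dx=\pi(1+\de^2)^{2a}\int_0^1\frac{du}{(\de^2+u)^{2a}},
\]
and the prefactor $(1+\de^2)^{2a}=1+O(\de^2)$ is harmless. The remaining one-dimensional integral has primitive $(\de^2+u)^{1-2a}/(1-2a)$ when $a\neq\frac12$ and $\ln(\de^2+u)$ when $a=\frac12$, so its leading order is dictated entirely by the behaviour of $(\de^2+u)^{-2a}$ near $u=0$ and changes character precisely at $a=\frac12$. Evaluating $\int_{\BB}e^{aP\Ud}\,dx$ regime by regime, I obtain its exact leading order: it converges to the finite constant $\pi/(1-2a)$ when $0<a<\frac12$ (hence is $O(1)$); it equals $\pi\ln\frac{1}{\de^2}+O(1)$ when $a=\frac12$; and it is asymptotic to $\frac{\pi}{2a-1}\,\de^{2(1-2a)}$ when $\frac12<a<1$. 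Since these integrals enter $\Jd$ in Proposition~\ref{prop:Jblowdown} only through their logarithms, passing to $\ln$ in each of the three cases converts these leading orders into exactly the right-hand sides $O(1)$, $\ln\ln\frac{1}{\de^2}+O(1)$ and $(2a-1)\ln\frac{1}{\de^2}+O(1)$ recorded in the statement (the borderline $a=\frac12$ being the only place where the trichotomy is genuinely non-trivial).

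The computation is entirely elementary and I expect no genuine obstacle; the points requiring care are the borderline exponent $a=\frac12$, where the power primitive degenerates into a logarithm and must be treated separately, and the bookkeeping of the $O(1)$ remainders — the prefactor $(1+\de^2)^{2a}$ and the convergent tail $(\de^2+1)^{1-2a}$ — which must be shown to stay bounded uniformly in $\de$ so that they do not corrupt the displayed constants.
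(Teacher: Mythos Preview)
Your argument is correct and is precisely the elementary computation the paper has in mind when it writes ``readily verified'' in lieu of a proof. You also correctly spotted that the second expansion, as literally written, records $\ln\int_{\BB}e^{aP\Ud}$ rather than $\int_{\BB}e^{aP\Ud}$ (consistently with its use in Proposition~\ref{prop:Jblowdown}), and you silently corrected the paper's formula for $P\Ud$ to have $(\de^2+|x|^2)^2$ in the denominator so that it actually vanishes on $\partial\BB$; both fixes are right and worth flagging explicitly at the start of your write-up.
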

For later convenience, we note that
\beq
\label{eq:tau}
0<\frac{\tau}{1+\tau}<\frac{\sqrt\tau}{1+\sqrt\tau}<\frac{1}{2},
\qquad\hbox{for all }0<\tau<1.
\eeq
\begin{lemma}
\label{lem:gataufirst}
Suppose $\ga>\tau/(1+\tau)$. Then, there exists a solution $t=t_\ga$ to the 
system:
\beq
\label{eq:tgasys}
\bca
t\ga>\frac{1}{2}\\
8\pi t^2-2\la(\tau+(1-\tau)\ga)t+\la<0
\eca
\eeq
provided that
\beq
\label{eq:lagebl}
\la>\frac{8\pi}{(\tau+(1-\tau)\ga)^2}.
\eeq
\end{lemma}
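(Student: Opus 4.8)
The plan is to exhibit an explicit admissible value of $t$ and to check the two conditions in \eqref{eq:tgasys} directly. Abbreviate $A:=\tau+(1-\tau)\ga\in(0,1)$, so that \eqref{eq:lagebl} reads $\la A^{2}>8\pi$, and set $t_\ga:=1/A=1/(\tau+(1-\tau)\ga)$. The virtue of this choice is that $2\la A\,t_\ga=2\la$, so the quadratic in the second inequality collapses:
\[
8\pi t_\ga^{2}-2\la\bigl(\tau+(1-\tau)\ga\bigr)t_\ga+\la=\frac{8\pi}{A^{2}}-2\la+\la=\frac{8\pi}{A^{2}}-\la,
\]
which is negative precisely when $\la>8\pi/A^{2}=8\pi/(\tau+(1-\tau)\ga)^{2}$, i.e.\ exactly under \eqref{eq:lagebl}. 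For the first inequality, $t_\ga\ga=\ga/A>1/2$ is equivalent to $2\ga>A=\tau+(1-\tau)\ga$, i.e.\ $\ga(1+\tau)>\tau$, i.e.\ $\ga>\tau/(1+\tau)$, which is the standing hypothesis of the lemma. Hence $t=t_\ga$ solves \eqref{eq:tgasys} and the lemma follows.

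There is essentially no obstacle here: the content is the algebraic identity $\bigl(8\pi t^{2}-2\la At+\la\bigr)\big|_{t=1/A}=8\pi/A^{2}-\la$ together with the observation that the two hypotheses correspond one-to-one with the two inequalities. I would add the remark that $t=1/A$ is in fact the \emph{only} value of $t$ for which the second inequality holds simultaneously for all $\la>8\pi/A^{2}$: rewriting it as $\la>8\pi t^{2}/(2At-1)$ on $\{2At>1\}$, the requirement $8\pi t^{2}/(2At-1)\le8\pi/A^{2}$ amounts to $(At-1)^{2}\le0$, which forces $t=1/A$. This is why $t_\ga$ may be taken independent of $\la$, as is needed when the lemma is fed into Proposition~\ref{prop:Jblowdown}: via the expansions of Lemma~\ref{lem:PUdexp}, the bracket $8\pi t^{2}-2\la At+\la$ is exactly the coefficient of $\ln(1/\de^{2})$ in $J_\la(t\,P\Ud)$, while the condition $t\ga>1/2$ guarantees that those expansions are in the regime where that coefficient is the leading term.

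If one prefers an argument that does not guess $t_\ga$ outright, one can instead note that under \eqref{eq:lagebl} the discriminant $4\la(\la A^{2}-8\pi)$ of $q(t):=8\pi t^{2}-2\la At+\la$ is strictly positive, so $q<0$ on an open interval $(t_-,t_+)$ whose midpoint is $\la A/(8\pi)$; since $\la A/(8\pi)>1/(2\ga)$ is equivalent to $\la>4\pi/(A\ga)$, which follows from $\la>8\pi/A^{2}$ and $2\ga>A$, one may take $t_\ga=\la A/(8\pi)$, or indeed any point of $\bigl(\max\{t_-,1/(2\ga)\},\,t_+\bigr)$. Both routes are routine, and I would present the first since it yields a $\la$-independent $t_\ga$.
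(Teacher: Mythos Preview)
Your proof is correct and takes a more direct route than the paper's. The paper argues via the discriminant: it shows that under \eqref{eq:lagebl} the quadratic $q(t)=8\pi t^{2}-2\la A t+\la$ (with $A:=\tau+(1-\tau)\ga$) has two real roots $t_\pm$, verifies that $t_+>1/A$, observes that $\ga/A>1/2$ is equivalent to the hypothesis $\ga>\tau/(1+\tau)$, and hence $t_+\ga>1/2$; then $t_\ga$ is chosen just below $t_+$. Your approach bypasses the roots by plugging in $t_\ga=1/A$ directly and checking both inequalities in one line each. The payoff of your choice, which you correctly emphasize, is that $t_\ga=1/A$ is independent of $\la$, whereas the paper's $t_\ga$ (being close to $t_+$, which depends on $\la$) is not obviously so; since Proposition~\ref{prop:Jblowdown} asks for a single $t_\ga$ valid for \emph{all} $\la>\bl$, your argument feeds more cleanly into that application. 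Your uniqueness remark (that $t=1/A$ is the only $\la$-independent choice) and the alternative vertex argument are both correct; I would keep the explicit choice as the main argument and drop the alternative, since it reintroduces $\la$-dependence.
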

\begin{proof}
By elementary considerations, a necessary condition for a solution $t$ to the second inequality in \eqref{eq:tgasys}
to exist is that
\[
\la^2(\tau+(1-\tau)\ga)^2-8\pi\la>0,
\]
and moreover $t_-<t<t_+$ where
\[
t_\pm=\frac{\la(\tau+(1-\tau)\ga)\pm\sqrt{\la^2(\tau+(1-\tau)\ga)^2-8\pi\la}}{8\pi}
\]
hence we readily derive \eqref{eq:lagebl}.
Hence, assuming that \eqref{eq:lagebl} holds true, 
we are left to check compatibility with the first inequality in \eqref{eq:tgasys}.
To this end, we observe that in view of \eqref{eq:lagebl} we have
\[
t_+=\frac{\la(\tau+(1-\tau)\ga)+\sqrt{\la^2(\tau+(1-\tau)\ga)^2-8\pi\la}}{8\pi}
>\frac{1}{\tau+(1-\tau)\ga}.
\]
In particular, we have that a sufficient condition for $t_+\ga>1/2$ 
is given by $\ga/(\tau+(1-\tau)\ga)>1/2$.
The latter inequality is equivalent to $\ga>\tau/(1+\tau)$, which holds true by assumption.
We conclude that there exists a solution $t_\ga$ to \eqref{eq:tgasys}
satisfying $0<t_+-t_\ga\ll1$.
\end{proof}
\begin{lemma}
\label{lem:gatausecond}
If $0<\ga<1/2$, there exists a solution $t_\ga$ to the system
\beq
\label{eq:tgasys2}
\bca
\frac{1}{2}<t<\frac{1}{2\ga}\\
8\pi t^2-\la\tau(2t-1)<0
\eca
\eeq
provided $\la>8\pi/\tau$.
\end{lemma}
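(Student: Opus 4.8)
The plan is to treat Lemma~\ref{lem:gatausecond} as an elementary statement about a single quadratic inequality, in complete analogy with the proof of Lemma~\ref{lem:gataufirst}. First I would rewrite the second inequality in \eqref{eq:tgasys2} as $q(t)<0$, where
\[
q(t):=8\pi t^2-2\la\tau t+\la\tau
\]
is an upward-opening parabola in $t$. Its discriminant equals $4(\la\tau)^2-32\pi\la\tau=4\la\tau(\la\tau-8\pi)$, which is strictly positive precisely when $\la\tau>8\pi$, that is, when $\la>8\pi/\tau$ --- exactly the hypothesis. Hence $q$ has two distinct real roots
\[
t_\pm=\frac{\la\tau\pm\sqrt{\la\tau(\la\tau-8\pi)}}{8\pi},
\]
and $q(t)<0$ holds exactly on the open interval $(t_-,t_+)$. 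It therefore suffices to exhibit a point of $(t_-,t_+)$ lying in $(1/2,1/(2\ga))$.

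Second, I would localize the smaller root $t_-$, claiming that $1/2<t_-<1$ for every $\la$ with $\la\tau>8\pi$. The bound $t_->1/2$ is equivalent to $\la\tau-4\pi>\sqrt{\la\tau(\la\tau-8\pi)}$; since $\la\tau>8\pi>4\pi$ both sides are positive, and squaring reduces the inequality to $16\pi^2>0$. Likewise $t_-<1$ is equivalent to $\la\tau-8\pi<\sqrt{\la\tau(\la\tau-8\pi)}$; the left-hand side is positive because $\la\tau>8\pi$, and squaring (then dividing by $\la\tau-8\pi>0$) reduces it to $-8\pi<0$. Thus $t_-\in(1/2,1)$, with no further restriction on $\la$.

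Third, I invoke the standing hypothesis $0<\ga<1/2$, which gives $1/(2\ga)>1$. Combined with the previous step this yields the chain $1/2<t_-<1<1/(2\ga)$, so $t_-$ itself lies strictly inside $(1/2,1/(2\ga))$; moreover $t_-<t_+$. Hence the interval $(t_-,\min\{t_+,1/(2\ga)\})$ is nonempty, and any $t_\ga$ chosen in it satisfies both $1/2<t_\ga<1/(2\ga)$ and $q(t_\ga)<0$, that is, solves the system \eqref{eq:tgasys2}.

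I do not expect a genuine obstacle here: the argument is elementary and parallels Lemma~\ref{lem:gataufirst}. The only point deserving attention is the compatibility between the quadratic constraint and the interval $(1/2,1/(2\ga))$ dictated by the expansions in Lemma~\ref{lem:PUdexp}; one must check that the root $t_-$ falls to the \emph{left} of $1/(2\ga)$, and this is precisely where the assumption $\ga<1/2$ enters (through $1/(2\ga)>1>t_-$). Once this placement is verified, no case distinction on the magnitude of $\la$ beyond $\la>8\pi/\tau$ is required, since $t_-$ stays uniformly in $(1/2,1)$.
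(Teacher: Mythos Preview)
Your argument is correct and follows the same overall strategy as the paper: compute the roots $t_\pm$ of the quadratic, then show that $t_-$ lies in the open interval $(1/2,1/(2\ga))$, so that any $t_\ga$ just above $t_-$ solves the system. The one genuine difference is in how you place $t_-$ to the left of $1/(2\ga)$. The paper verifies $t_-<1/(2\ga)$ directly, which forces a case split according to the sign of $\la\tau-4\pi/\ga$ and, in the nontrivial case, an appeal to $\la>4\pi/(\tau\ga)$ together with $\ga<1/2$. You instead prove the sharper uniform bound $t_-<1$, which follows immediately from $\la\tau>8\pi$, and then use $\ga<1/2\Rightarrow 1<1/(2\ga)$. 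This is cleaner: it isolates exactly where the hypothesis $\ga<1/2$ enters and avoids the case distinction altogether, at no cost.
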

\begin{proof}
By elementary considerations, solutions to the second inequality in \eqref{eq:tgasys2} exist
if and only if
\[
(\la\tau)^2-8\pi\la\tau>0,
\]
which is equivalent to $\la>8\pi/\tau$.
Moreover, solutions satisfy $t_-<t<t_+$ where
\[
t_\pm=\frac{\la\tau\pm\sqrt{(\la\tau)^2-8\pi\la\tau}}{8\pi}.
\]
Thus, a sufficient condition for a solution to \eqref{eq:tgasys2} to exist
is given by $1/2<t_-<1/(2\ga)$.
It is readily checked that the inequality $t_->1/2$ is always satisfied,
since we may write
\beq
\begin{aligned}
t_-=\frac{(\la\tau)^2-[(\la\tau)^2-8\pi\la\tau]}{8\pi(\la\tau+\sqrt{(\la\tau)^2-8\pi\la\tau})}
>\frac{1}{2}.
\end{aligned}
\eeq
On the other hand, the inequality $t_-<1/(2\ga)$ is equivalent to
\[
\sqrt{(\la\tau)^2-8\pi\la\tau}>\la\tau-\frac{4\pi}{\ga}.
\]
Therefore, if $\la\tau-4\pi/\ga<0$, the second inequality holds true.
Assuming $\la\tau-4\pi/\ga\ge0$, i.e., $\la>4\pi/(\tau\ga)$, we obtain from above the inequality
\[
\ga^2[(\la\tau)^2-8\pi\la\tau]>(\la\tau\ga-4\pi)^2.
\]
We deduce the equivalent condition
$\la\tau\ga(1-\ga)>2\pi$.
Since $\la>4\pi/(\tau\ga)$, a sufficient condition for the above is
$4\pi(1-\ga)>2\pi$, equivalently $\ga<1/2$.
\end{proof}
Now we can prove Proposition~\ref{prop:Jblowdown}.
\begin{proof}[Proof of Proposition~\ref{prop:Jblowdown}]
We first assume that $\ga\ge\sqrt\tau/(1+\sqrt\tau)$ and therefore 
$\bl=8\pi/(\la\tau\ga(1-\ga))^2$.
In view of \eqref{eq:tau}, we have $\sqrt\tau/(1+\sqrt\tau)>\tau/(1+\tau)$, 
so in particular
$\ga>\tau/(1+\tau)$. We deduce from Lemma~\ref{lem:gataufirst}
that there exists a solution $t_\ga$ to system \eqref{eq:tgasys}.
Now, Lemma~\ref{lem:PUdexp} implies that
\[
\bal
J_\la(t_\ga P\Ud)
=&\left\{8\pi t_\ga^2-\la[\tau(2t_\ga-1)+(1-\tau)(2t_\ga\ga-1)]\right\}\ln\frac{1}{\de^2}
+O(1)\\
=&\left\{8\pi t_\ga^2-2\la(\tau+(1-\tau)\ga)t_\ga+\la\right\}\ln\frac{1}{\de^2}
+O(1)\to-\infty
\eal
\]
as $\de\to0^+$.
Hence, the desired asymptotics is established in this case.
\par
Now, we assume $\ga<\sqrt\tau/(1+\sqrt\tau)$. Then, $\bl=8\pi/\tau$.
Since $\sqrt\tau/(1+\sqrt\tau)<1/2$, we may apply Lemma~\ref{lem:gatausecond}
to conclude that there exists a solution $t_\ga$ to system~\eqref{eq:tgasys2}.
Lemma~\ref{lem:PUdexp} yields
\[
J_\la(t_\ga P\Ud)=\{8\pi t_\ga^2-\la\tau(2t_\ga-1)\}\ln\frac{1}{\de^2}+O(1)\to-\infty
\]
as $\de\to0^+$.
Now Proposition~\ref{prop:Jblowdown} is completely established.
\end{proof}
%%%%%%%%%%%%%%%%%%%%%%%%%%%%%%%%%%%%%%%%%%%%%%%%%%%%%%%%%%%%%%%%%%%%%%%%%%%%%%%
%%%%%%%%%%%%%%%%%%%%%%%%%%%%%%%%%%%%%%%%%%%%%%%%%%%%%%%%%%%%%%%%%%%%%%%%%%%%%%%

%%%%%%%%%%%%%%%%%%%%%%%%%%%%%%%%%%%%%%%%%%%%%%%%%%%%%%%%%%%%%%%%%%%%%%%%%%%%%%%%%%~~~~
%%%%%%%%%%%%%%%%%%%%%%%%%%%%%%%%%%%%%%%%%%%%%%%%%%%%%%%%%%%%%%%%%%%%%%%%%%%%%%%%%%
\section*{Acknowledgments}
The second author thanks Universit\`{a} di Napoli Federico II, where part of this work was completed,
for support and hospitality. 
He was also supported by JSPS KAKENHI Grant Number JP16K17627. 
%%%%%%%%%%%%%%%%%%%%%%%%%%%%%%%%%%%%%%%%%%%%%%%%%%%%%%%%%%%%%%%%%%%%%%%%%%%%%%%%%%
%%%%%%%%%%%%%%%%%%%%%%%%%%%%%%%%%%%%%%%%%%%%%%%%%%%%%%%%%%%%%%%%%%%%%%%%%%%%%%%%%%
%%%%%%%%%%%%%%%%%%%%%%%%%%%%%%%%%%%%%%%%%%%%%%%%%%%%%%%%%%%%%%%%%%%%%%%%%%%%%%%%%%

\end{document}